\newtheorem{thm}{Theorem}[section]
\newtheorem{cor}[thm]{Corollary}
\newtheorem{lem}[thm]{Lemma}
\newtheorem{prop}[thm]{Proposition}
\theoremstyle{definition}
\newtheorem{rem}[thm]{Remark}
\newtheorem{exa}[thm]{Example}
\numberwithin{equation}{section}
\newcommand{\C}{\mathbb{C}}
\newcommand{\R}{\mathbb{R}}
\newcommand{\N}{\mathbb{N}}
\newcommand{\mfs}{\mathfrak{s}}
\begin{document}


\baselineskip=17pt



\title[New limit theorems related to free multiplicative convolution]{New limit theorems \\
related to free multiplicative convolution}

\author[N. Sakuma]{Noriyoshi Sakuma}
\address{
Department of Mathematics,
Aichi University of Education,
1 Hirosawa, Igaya-cho, Kariya-shi, 448-8542, Japan}
\email{sakuma@auecc.aichi-edu.ac.jp}
\author[H. Yoshida]{Hiroaki Yoshida}
\address{
Department of Information Sciences, 
Ochanomizu University,
2-1-1, Otsuka, Bunkyo, Tokyo 112-8610 Japan}
\email{yoshida@is.ocha.ac.jp}

\date{}

\begin{abstract}
Let $\boxplus$, $\boxtimes$, and $\uplus$ be the free additive, 
free multiplicative, and boolean additive convolutions, respectively.
For a probability measure $\mu$ on $[0,\infty)$ with finite second moment,
we find a scaling limit of $(\mu^{\boxtimes N})^{\boxplus N}$ as $N$ goes to infinity.
The $\mathcal{R}$--transform of its limit distribution can be represented by
the Lambert's $W$ function. 
From this, we prove that the limiting distribution is freely infinitely divisible 
as well as the lognormal distribution in classical sense.
We also show a similar limit theorem by replacing 
the free additive convolution with the boolean convolution.  
\end{abstract}

\subjclass[2010]{Primary 46L54,  Secondary 15A52}

\keywords{free convolutions, boolean convolution, infinite divisibility,  Lambert's $W$--function, limit theorem}

\maketitle

\section{Introduction}
In probability theory, limit theorems and infinite divisibility are considered in various situations.
The classical references are the books by Gnedenko and Kolmogorov \cite{KoGn68} and Petrov \cite{Pe95}.
One of the most famous limit theorems is the Central Limit Theorem (for short CLT)
that is the scaling limit of the sum of independent, identically distributed (i.i.d.) random variables.
Suppose that a random variable $Z$ has the standard normal distribution.
Let $\{X_{k}\}_{k=1}^{\infty}$ be a sequence of i.i.d. random variables with finite second moment.
Then a scaling 
\begin{equation}\label{scaling}
\frac{X_{1}+ \dots +X_{N}-N\mathbb{E}[X_{1}]}{\sqrt{N\mathbb{V}[X_{1}]}}
\end{equation}
converges to $Z$ in distribution as $N$ goes to infinity.

When we consider the product of i.i.d. random variables, 
we have also a CLT type limit theorem.
The simplest case is as follows:
for a sequence of i.i.d. random variables $\{X_{k}\}_{k=1}^{\infty}$ with finite second moment,
we consider a scaling
\begin{align}\label{prodscale}
\prod_{k=1}^{N}\exp\left( 
\frac{X_{k}-\mathbb{E}[X_{k}]}{\sqrt{N\mathbb{V}[X_{k}]}} 
\right).
\end{align}
By the CLT, this scaling converges to $e^{Z}$ in distribution as $N$ goes to infinity.
The distribution of $e^{Z}$ is called the lognormal distribution.
It was proved by Thorin \cite{Th77b} that the lognormal distribution is infinitely divisible.
The product limit theorems are also interested from applications to statistics. 
For details, see \cite{ReWe02} and the book by Galambos and Simonelli \cite{Gala04}. 

In free probability theory, 
some limit theorems are known as in classical probability theory.
The most famous limit theorem is the free CLT,
which was found by Voiculescu.
If $\{X_k\}_{k\in \N}$ is a sequence of freely independent identically distributed 
(for short freely i.i.d.) random variables with finite second moment,
then the normalized sum
\eqref{scaling} 
converges to the standard Wigner's semi-circle law in distribution as $N$ goes 
to infinity.
In addition, we know the Poisson limit theorem, the stable limit theorem and so on,
for details, see \cite{HiPe00}, \cite{BeVo93}, and \cite{BePa99}. 
Recently other new limit theorems with respect to the free convolutions \cite{BeWa08}, \cite{Wa10}, and \cite{Tu10} have been studied. 

In this paper, we shall prove a limit theorem involving not only free additive 
but also free multiplicative convolutions.
We introduce a new normalized sum of multiplications of freely independent random variables.
For double sequence of freely i.i.d. random variables 
$\{\{X^{(j)}_{i}\}_{i\in\N}\}_{j\in\N}$ having a distribution $\mu$ on $[0,\infty)$
with finite second moment,
we consider a new normalization $Y_{N}$,
	\begin{align}\label{scalefree}
	&Y_{N}=\sum_{j=1}^{N}\frac{
	\sqrt{X^{(j)}_{N}} \cdots 
	\sqrt{X^{(j)}_{2}}
	X^{(j)}_{1}
	\sqrt{X^{(j)}_{2}}
	\cdots 
	\sqrt{X^{(j)}_{N}}}
	{m_{1}^{N}N},&
	\end{align}
where $m_{1}$ is the mean of the distribution $\mu$.
We shall see that
its limit distribution depends only on the first and second moments.
In its proof,
we shall investigate the Taylor type expansion of 
the $S$--transform.
In addition, a formula by Belinschi and Nica \cite{BeNi09} suggests that
the distribution of \eqref{scalefree} is equal to  the one of
$$
\widetilde{Y_{N}} = \frac{\sqrt{\sum_{i=1}^{N}X^{(N)}_{i}}\cdots\sqrt{\sum_{i=1}^{N}X^{(1)}_{i}}\sqrt{\sum_{i=1}^{N}X^{(1)}_{i}}\dots\sqrt{\sum_{i=1}^{N}X^{(N)}_{i}}}{m_{1}^{N}N^{N}},
$$
which is corresponding to the scaling \eqref{prodscale}. 
In this meaning, we may call it free lognormal distribution.
Compare to free additive CLT case, it is not exactly the same scaling. 
The difference may occur because of non-commutativity of random variables.
Furthermore a similar limit theorem can be found under boolean independence.

In order to investigate properties of this limit distribution, we show that it is freely infinitely divisible
as in classical case lognormal distribution is infinitely divisible.
In its proof, the properties of Lambert's $W$-- function play an important role and we obtain its L\'evy measure.

This paper is organized as follows.
In Section $2$, we shall gather the tools for free and boolean probability.
Especially, we recall $\mathcal{R}$, $S$, and $\Sigma$--transforms and infinite divisibility in free probability theory.
In Section $3$, we shall give the Taylor type expansions for $S$ and $\Sigma$--transforms and prove our limit theorems.
Finally, in Section $4$, we shall discuss the limit distributions 
with focusing on infinite divisibility and moments.

\section{Preliminaries}
Let $\R_+$ be the half line $[0, +\infty)$ and $\C^{+}$ be the upper half plane $\{z=x+iy\in\C;y>0\}$.
We fix notation that
$\mathcal{P}$ and $\mathcal{P}_+$ mean the set of all Borel probability measures
on $\R$ and $\R_+$, respectively.
We denote the free additive, free multiplicative, 
and boolean additive
convolutions 
by $\boxplus$, $\boxtimes$, and $\uplus$, respectively,
see for details of convolutions, \cite{SW97},
\cite{VoDyNi92}, and \cite{NiSp06}.
Hereafter, $\delta_{0}$ stands the Dirac probability measure concentrated on $0$.

\subsection{Analytic tools for free and boolean convolutions}

Here, we shall gather the analytic tools for free and boolean probability and mention some of their important facts.

We denote the Cauchy transform of a probability measure $\mu$ on $\R$ by
\[
G_{\mu}(z) = \int_{\R}\frac{1}{z-x}\mu(dx), \quad z \in \C^+ ,
\]
and 
\[
\Psi_{\rho}(z) = \int_{\R}\frac{xz}{1-xz}\rho(dx), \quad z \in \C \backslash \R
\] 
denotes
the moment generating function of $\rho$ on $\R_+$.
Then the Speicher's $R$ and Voiculescu's $\mathcal{R}$--transforms of $\mu$ are defined as follow:
for any given $\alpha>0$, one can find $\beta>0$ so that 
\[
R_{\mu}(z)= z \mathcal{R}_{\mu}(z)
= z G_{\mu}^{-1}(z)-1, \quad 1/z \in \Gamma_{\alpha,\beta},
\]
where $G_{\mu}^{-1}(z)$ is the right inverse of $G_{\mu}(z)$ with respect to 
composition and 
$\Gamma_{\alpha,\beta}=\{ z=x+iy \in \C^{+}; y>\beta, |y| >\alpha x\}$.
Note that we will use both $R$ and $\mathcal{R}$--transforms for convenience.
The $S$ and $\Sigma$--transforms of $\rho$ are defined by
\[
S_{\rho}(z)=\frac{(z+1)\Psi_{\rho}^{-1}(z)}{z}, \quad z \in \Psi_{\rho}(i\C^+)
\]
and 
\[
\Sigma_{\rho}(z)=S_{\rho}\left(\frac{z}{1-z}\right),
\quad \frac{z}{1-z} \in \Psi_{\rho}(i\C^+),
\]
respectively, where $\Psi_{\rho}^{-1}(z)$ is the right inverse of $\Psi_{\rho}(z)$
with respect to composition.
Now, we summarize the relations between the transforms and convolutions, see for proofs \cite{BeVo93} and \cite{BeNi09}.
\begin{prop}\label{basic}
	For $\mu_1\in\mathcal{P}$, $\mu_2\in\mathcal{P}$, $\rho_1\in\mathcal{P}_+$ and 
	$\rho_2\in\mathcal{P}_+$, which are not $\delta_{0}$, there exist $\alpha>0$ and $\beta>0$ such that
		\begin{align*}
		&R_{\mu_1 \boxplus \mu_2} (z)=
		R_{\mu_1}(z)+R_{\mu_2} (z),
		\quad 1/z \in \Gamma_{\alpha,\beta},&\\
		&S_{\rho_1 \boxtimes \rho_2}(z)
		=S_{\rho_1}(z)S_{\rho_2}(z),\quad z \in \Psi_{\rho_1}(i\C^+)\cap\Psi_{\rho_2}(i\C^+),&\\
		&S_{\rho_1^{\boxplus t}}(z)
		=\frac{1}{_{\,}t_{\,}} S_{\rho_1}\left(\frac{z}{_{\,}t_{\,}}\right),&\\
		&\Sigma_{\rho_1 \boxtimes \rho_2}(z)
		=\Sigma_{\rho_1}(z)\Sigma_{\rho_2}(z),\quad z/(1-z) \in \Psi_{\rho_1}(i\C^+)\cap\Psi_{\rho_2}(i\C^+), &\\
		&\Sigma_{\rho_1^{\uplus t}}(z)
		=\frac{1}{_{\,}t_{\,}} \Sigma_{\rho_1}\left(\frac{z}{_{\,}t_{\,}}\right).&
		\end{align*}
\end{prop}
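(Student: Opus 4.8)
The five identities fall into two groups: the foundational linearization theorems for $R$ and $S$, which carry the real content, and the remaining three relations, which I would obtain from them by purely functional manipulations. The plan is therefore to establish $R$-additivity and $S$-multiplicativity first and then deduce the $\Sigma$-multiplicativity and the two scaling laws.

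For the $R$-transform identity I would take the combinatorial route. Expanding $G_{\mu}(z)=\sum_{n\ge0}m_n z^{-n-1}$ and introducing the free cumulants $\{\kappa_n\}$ through the non-crossing moment--cumulant formula, one verifies that $\mathcal{R}_{\mu}(z)=\sum_{n\ge1}\kappa_n z^{n-1}$ is exactly their generating series, equivalently $\mathcal{R}_{\mu}(z)=G_{\mu}^{-1}(z)-1/z$, which matches the definition $R_{\mu}(z)=z\mathcal{R}_{\mu}(z)=zG_{\mu}^{-1}(z)-1$. The heart of the matter is the characterization of freeness by the vanishing of mixed free cumulants: realizing $\mu_1,\mu_2$ by free elements $a_1,a_2$, each free cumulant of $a_1+a_2$ splits as $\kappa_n(a_1)+\kappa_n(a_2)$, so the cumulant series add and the first identity follows. (An analytic proof through Cauchy-transform subordination is also available.)

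The step I expect to be the main obstacle is $S$-multiplicativity. Here I would pass to the moment series $\Psi_{\rho}$, realize $\rho_1,\rho_2$ by free positive elements $x,y$, and analyze the moments of $x^{1/2}yx^{1/2}$, showing either by Haagerup's analytic argument or by the non-crossing combinatorics of alternating products that the inverse moment series $\Psi^{-1}$ combine to give $S_{\rho_1\boxtimes\rho_2}=S_{\rho_1}S_{\rho_2}$. Granting this, the $\Sigma$-multiplicativity is immediate from the definition $\Sigma_{\rho}(z)=S_{\rho}\bigl(z/(1-z)\bigr)$, since substituting $z/(1-z)$ into a product of $S$-transforms factorizes term by term.

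It remains to derive the two scaling laws, and both follow from one common mechanism: a convolution is linearized by a cumulant transform, and $S$ (respectively $\Sigma$) is expressed through the \emph{inverse} of that transform. Starting from $G_{\mu}(1/z)=z(1+\Psi_{\mu}(z))$ one obtains, after inverting, the functional relation
\[
\mathcal{R}_{\mu}\bigl(z\,S_{\mu}(z)\bigr)=\frac{1}{S_{\mu}(z)}.
\]
In the free case $R$-additivity gives $\mathcal{R}_{\rho^{\boxplus t}}=t\,\mathcal{R}_{\rho}$; inserting the ansatz $T(z)=t^{-1}S_{\rho}(z/t)$, for which $zT(z)=(z/t)S_{\rho}(z/t)$, one checks that $T$ satisfies the displayed relation for $\rho^{\boxplus t}$, whence $S_{\rho^{\boxplus t}}(z)=t^{-1}S_{\rho}(z/t)$. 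For the boolean case I would introduce the self-energy transform $E_{\rho}(z)=z-1/G_{\rho}(z)$, which linearizes $\uplus$, together with its rescaling $\eta_{\rho}(z):=zE_{\rho}(1/z)=\Psi_{\rho}(z)/(1+\Psi_{\rho}(z))$, so that $\eta_{\rho^{\uplus t}}=t\,\eta_{\rho}$. A short computation from the definitions yields the boolean analogue $\Sigma_{\rho}(z)=\eta_{\rho}^{-1}(z)/z$; combined with $\eta_{\rho^{\uplus t}}^{-1}(z)=\eta_{\rho}^{-1}(z/t)$ this gives $\Sigma_{\rho^{\uplus t}}(z)=t^{-1}\Sigma_{\rho}(z/t)$, completing the proposition.
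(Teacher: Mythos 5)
Your proposal is correct, but there is nothing in the paper to compare it against line by line: the authors do not prove Proposition \ref{basic} at all, they simply cite Bercovici--Voiculescu and Belinschi--Nica for all five identities. What you have done instead is reconstruct the derivations, and your decomposition is the right one: the only genuinely deep inputs are $R$-additivity and $S$-multiplicativity (for which you correctly point to vanishing of mixed free cumulants, respectively Haagerup's argument or the combinatorics of alternating moments --- at the level of a sketch it is fair to defer these, since they are exactly what the cited references supply), and the remaining three identities are formal consequences. Your functional relation $\mathcal{R}_{\mu}\bigl(zS_{\mu}(z)\bigr)=1/S_{\mu}(z)$ is precisely the paper's own Proposition \ref{Scum} in disguise (multiply both sides by $zS_{\mu}(z)$ to get $R_{\mu}(zS_{\mu}(z))=z$), so the paper implicitly endorses this route; equivalently, one can phrase your ansatz verification as $S_{\mu}(z)=R_{\mu}^{-1}(z)/z$ together with $R_{\rho^{\boxplus t}}=tR_{\rho}$, which makes the uniqueness of the solution to the functional equation manifest. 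Your boolean computation is also correct: $\eta_{\rho}(z)=\Psi_{\rho}(z)/(1+\Psi_{\rho}(z))$ linearizes $\uplus$ after scaling by $t$, $\eta_{\rho}^{-1}(z)=\Psi_{\rho}^{-1}\bigl(z/(1-z)\bigr)$, and hence $\Sigma_{\rho}(z)=\eta_{\rho}^{-1}(z)/z$, from which the dilation law follows in one line; this is the standard Speicher--Woroudi/Belinschi--Nica mechanism. The only caveats worth recording are minor: the existence of $\rho^{\boxplus t}$ for non-integer $t$ (needed for the third identity as stated) requires the Nica--Speicher semigroup result, and the domains $\Gamma_{\alpha,\beta}$ and $\Psi_{\rho_i}(i\C^{+})$ on which the identities hold need the univalence statements that the paper itself only establishes later; neither gap is specific to your argument.
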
 
For $c > 0$, the dilation operator $D_c$ on $\mathcal{P}$ is defined by 
	\[
	D_c(\mu) (B)= \mu\left(\frac{1}{_{\,}c_{\,}}B\right) 
	\]
for any Borel set $B$ on $\R_+$,
where $\frac{1}{\, c \,}B=\{x\in\R ; \frac{1}{\,c\,}x \in B \}$.
If a random variable $X$ has a distribution $\mu$, then $cX$ is distributed as $D_{c}(\mu)$.
In the paper \cite{BeNi09}, the authors showed that
	\[
	S_{D_c(\mu)}(z) = \frac{1}{_{\,}c_{\,}}S_{\mu}(z)
	\]
and
	\[
	\Sigma_{D_c(\mu)}(z) = \frac{1}{_{\,}c_{\,}}\Sigma_{\mu}(z).
	\]
	
\subsection{Infinite divisibility for free additive convolution}
A probability measure $\mu$ is freely infinitely divisible (or $\boxplus$--infinitely divisible) if 
for any $n\in\N$ there exists $\mu_n\in\mathcal{P}$ such that
	\begin{align*}
	\mu = 
	\underbrace{\mu_n \boxplus\cdots \boxplus\mu_n}_{n \text{ times}}.
	\end{align*}
We denote the class of all $\boxplus$--infinitely divisible distributions by $I^{\boxplus}$.
	\begin{rem}
	We can define other infinite divisibility replacing $\boxplus$ 
	by $\boxtimes$ or $\uplus$.
	But for boolean convolution, all probability measures are 
	$\uplus$--infinitely divisible. 
	So we shall not discuss $\uplus$--divisibility any longer.
	\end{rem}
The next proposition characterizes the $\boxplus$--infinitely divisible laws \cite[Theorem3.7.2]{VoDyNi92}. 
\begin{prop}\label{id}
	The followings are equivalent:
		\begin{enumerate}[{\rm (1)}]
		\item $\mu \in I^{\boxplus}$.
		\item $\mathcal{R}_{\mu}$ has an analytic extension defined on $\C^-$ with value $\C^- \cup \R$.
		\item There exist unique $b_{\mu}\in\R$ 
		and finite measure $\nu_{\mu}$
		such that
			\begin{align*}
			\mathcal{R}_{\mu}(z) 
			= b_{\mu} + \int_{\R} \frac{z}{1-tz}\nu_{\mu}({\mathrm d}t),
			\quad z \in \C^-.
			\end{align*}
		\end{enumerate}
\end{prop}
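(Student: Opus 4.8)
The plan is to prove the cycle (3) $\Rightarrow$ (2) $\Rightarrow$ (1) $\Rightarrow$ (2)/(3), isolating the purely analytic content in the equivalence (2) $\Leftrightarrow$ (3) and reserving the probabilistic work for the implications involving (1). The implication (3) $\Rightarrow$ (2) is a direct computation: for $z=x+iy\in\C^-$ and $t\in\R$ one has $\operatorname{Im}\frac{z}{1-tz}=\frac{y}{|1-tz|^{2}}\le 0$, so $\int_{\R}\frac{z}{1-tz}\,\nu_{\mu}(dt)$ defines an analytic function on $\C^-$ with nonpositive imaginary part (analyticity and convergence follow from dominated convergence, using that $\nu_{\mu}$ is finite and the integrand is bounded and analytic in $z$ locally uniformly in $t$), and adding the real constant $b_{\mu}$ keeps the values in $\C^-\cup\R$.

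For (2) $\Rightarrow$ (3) I would substitute $w=1/z$, which maps $\C^-$ onto $\C^+$, and set $H(w):=\mathcal{R}_{\mu}(1/w)-b_{\mu}$, where $b_{\mu}:=\lim_{z\to 0,\,z\in\C^-}\mathcal{R}_{\mu}(z)$; this limit exists and equals the first moment because $G_{\mu}(z)=1/z+m_{1}/z^{2}+o(1/z^{2})$ at infinity forces $G_{\mu}^{-1}(w)=1/w+m_{1}+o(1)$ near $0$. The algebraic identity $\int_{\R}\frac{z}{1-tz}\,\nu(dt)=\int_{\R}\frac{1}{w-t}\,\nu(dt)$ shows that (3) is \emph{equivalent} to the assertion that $H$ is the Cauchy transform of a finite positive measure. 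By (2), $H$ maps $\C^+$ into $\C^-\cup\R$, so $-H$ belongs to the Nevanlinna class and admits the Nevanlinna representation; the representation takes the required form (no linear term, finite representing measure) exactly when $\lim_{y\to\infty}iy\,H(iy)$ exists and is finite, in which case that limit equals $\nu_{\mu}(\R)$ and $\nu_{\mu}$ together with its uniqueness are recovered by Stieltjes inversion. I would note here that this finiteness is precisely the finite-variance condition $\mathcal{R}_{\mu}'(0)<\infty$, which is where the finite-second-moment setting of the paper enters; the unrestricted case would require a compensated kernel.

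For (3) $\Rightarrow$ (1) I would exploit the scaling built into the representation: for each $n\in\N$ the pair $(b_{\mu}/n,\nu_{\mu}/n)$ again satisfies (3), hence by (3) $\Rightarrow$ (2) the function $\tfrac{1}{n}\mathcal{R}_{\mu}$ has the half-plane mapping property. One then checks that such a function is realized as the $\mathcal{R}$-transform of a genuine probability measure $\mu_{n}$: writing $K(z)=\tfrac{1}{n}\mathcal{R}_{\mu}(z)+1/z$, the mapping property guarantees that $K$ has a local inverse which is the Cauchy transform of a probability measure, so $K=G_{\mu_{n}}^{-1}$. Since $R$-transforms are additive under $\boxplus$ (Proposition \ref{basic}), we get $\mathcal{R}_{\mu_{n}^{\boxplus n}}=n\cdot\tfrac{1}{n}\mathcal{R}_{\mu}=\mathcal{R}_{\mu}$, whence $\mu=\mu_{n}^{\boxplus n}$ and $\mu\in I^{\boxplus}$.

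The remaining implication (1) $\Rightarrow$ (2) is the hard one, and I expect it to be the main obstacle. Given $\mu=\mu_{n}^{\boxplus n}$ for all $n$, I would construct the free convolution semigroup $(\mu_{t})_{t\ge 0}$ with $\mathcal{R}_{\mu_{t}}=t\,\mathcal{R}_{\mu}$ and propagate analyticity from the truncated domains $\{\,1/z\in\Gamma_{\alpha,\beta}\,\}$ to all of $\C^-$. The difficulty is that each $\mathcal{R}_{\mu_{n}}$ is a priori only defined on such a partial cone, so one must show the domains can be taken uniform and that no boundary singularities obstruct the continuation. I anticipate this requires uniform control via a normal-families/compactness argument on the reciprocal Cauchy transforms $F_{\mu_{t}}=1/G_{\mu_{t}}$ and their subordination functions, combined with the semigroup relation, so that the local inverse $G_{\mu_{t}}^{-1}$ — and hence $\mathcal{R}_{\mu}$ — extends analytically to the whole lower half-plane with values in $\C^-\cup\R$. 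Once the global extension is secured, (2) holds, and (3) follows from the already-established equivalence (2) $\Leftrightarrow$ (3).
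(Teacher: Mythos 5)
The paper does not prove this proposition at all: it is quoted as a known characterization (essentially Theorem 5.10 of Bercovici--Voiculescu \cite{BeVo93}), so there is no in-paper argument to compare yours against; I can only judge your attempt on its own terms. The elementary parts are fine: your computation $\operatorname{Im}\frac{z}{1-tz}=\frac{\operatorname{Im}z}{|1-tz|^{2}}$ gives (3) $\Rightarrow$ (2), and the substitution $w=1/z$ turning (3) into ``$\mathcal{R}_{\mu}(1/w)-b_{\mu}$ is the Cauchy transform of a finite measure'' is the right normal form. You also correctly spot a real defect of the statement: with the uncompensated kernel $\frac{z}{1-tz}$ and a \emph{finite} $\nu_{\mu}$, condition (3) forces $\nu_{\mu}(\R)=\lim_{y\to\infty}iy\,H(iy)$ to be the (finite) free variance, so (2) $\Rightarrow$ (3) genuinely fails without a second-moment hypothesis (e.g.\ for the free Cauchy law $\mathcal{R}_{\mu}\equiv -i$ satisfies (2) but admits no representation (3) with $b_{\mu}\in\R$). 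That observation is worth keeping, but it means your (2) $\Rightarrow$ (3) is conditional on an hypothesis the proposition does not state.

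The two substantive gaps are elsewhere. First, in (3) $\Rightarrow$ (1) you assert that the half-plane mapping property of $\frac{1}{n}\mathcal{R}_{\mu}$ ``guarantees that $K(z)=\frac{1}{n}\mathcal{R}_{\mu}(z)+1/z$ has a local inverse which is the Cauchy transform of a probability measure.'' That is precisely the nontrivial existence half of the Bercovici--Voiculescu theorem: one must show $K$ (equivalently $F^{-1}(w)=w+\phi(w)$ in the $w=1/z$ picture) is univalent on a domain whose image is all of $\C^{+}$, and that its inverse is a Pick function with $F(w)/w\to 1$ nontangentially, hence a reciprocal Cauchy transform. Asserting it is not proving it. Second, and more seriously, (1) $\Rightarrow$ (2) --- which you yourself identify as the main obstacle --- is left as a plan, and the plan (propagating analyticity of $G_{\mu_t}^{-1}$ by normal families on the subordination functions) is not the route that is known to close; the standard proof instead establishes (1) $\Rightarrow$ (3) directly, by writing $\phi_{\mu}(w)=n\phi_{\mu_n}(w)\approx n\int\frac{1+tw}{w-t}\,\mu_n(\mathrm{d}t)$ and running a tightness/compactness argument on the pairs $\bigl(n\int\frac{t}{1+t^{2}}\mu_n(\mathrm{d}t),\ \frac{nt^{2}}{1+t^{2}}\mu_n(\mathrm{d}t)\bigr)$ to extract the L\'evy pair $(b_{\mu},\nu_{\mu})$ as a weak limit. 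Without that (or an equivalent) argument, the cycle is not closed and the proof is incomplete.
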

The above expression is called $\boxplus$--L\'evy--Khintchine representation, 
or simply L\'evy--Khintchine representation.

	\begin{exa}The
	typical examples of $\boxplus$--infinitely divisible distribution are
	the Wigner's semi-circle law, the Dirac's delta distribution, and
	the free Poisson distribution $\pi_t$ with parameter $t\geq0$  
	having density
	\begin{equation}
	\pi_{t}(\mathrm{d}x)
	=\max(0,(1-t))\delta_{0}(\mathrm{d}x)
	+\frac{1}{2\pi x}\sqrt{4t-(x-1-t)^{2}}\,\,\mathbb{1}
	_{[(1-\sqrt{t})^{2},(1+\sqrt{t})^{2}]}(x)\mathrm{d}x.  \label{MaPadis}
	\end{equation}
	The L\'evy measure $\nu_{\mu}$ and $b_{\mu}$ 
	of the semi-circle law are $\delta_{0}$ and $0$,
	and the free Poisson law $\pi_t$ has $b_{\mu}=t$ and 
    $\nu_{\mu}=t \delta_{1}$.
	We put $\pi_{1}$ by $\pi$.
	\end{exa}

The following functional equation of the $R$ and $S$--transforms can be found
in, for instance, \cite{NiSp06} or \cite[Lemma 2]{PAS09}:
\begin{prop}\label{Scum}
	Assume that $\mu\in\mathcal{P}_+$. 
	For some sufficiently small $\varepsilon>0$, 
	we have a region $D_{\varepsilon}$ which includes 
    $\{ -it ; 0 < t <\varepsilon \}$ 
	such that
		\begin{align}
		z = R_{\mu} \left(zS_{\mu}(z)\right),
		\end{align}
	for $z\in D_{\varepsilon}$.
\end{prop}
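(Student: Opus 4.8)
The plan is to reduce the assertion to the single algebraic identity
\[
1+\Psi_\mu(z)=\tfrac{1}{z}\,G_\mu\!\left(\tfrac{1}{z}\right),
\]
then to chase the definitions of $S_\mu$ and $R_\mu$ through it, and finally to promote the resulting equality from its natural domain to a neighbourhood of $0$ by analytic continuation. The identity itself is elementary: inside the integral defining $\Psi_\mu$ I would write $\tfrac{xz}{1-xz}=-1+\tfrac{1}{1-xz}$ and $1-xz=z(\tfrac1z-x)$, so that $\int(\tfrac1z-x)^{-1}\mu(dx)=G_\mu(1/z)$.

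Next I would fix $z$ in the domain $\Psi_\mu(i\C^+)$ of $S_\mu$ and set $v=\Psi_\mu^{-1}(z)$, so that $\Psi_\mu(v)=z$ with $v\in i\C^+$. By definition $zS_\mu(z)=(z+1)v$, whereas the identity evaluated at $v$ reads $z+1=1+\Psi_\mu(v)=\tfrac1v G_\mu(1/v)$; hence
\[
zS_\mu(z)=(z+1)\,v=G_\mu\!\left(\tfrac{1}{v}\right).
\]
Applying $R_\mu(w)=w\,G_\mu^{-1}(w)-1$ with $w=G_\mu(1/v)$ and using $G_\mu^{-1}\bigl(G_\mu(1/v)\bigr)=1/v$ then gives
\[
R_\mu\bigl(zS_\mu(z)\bigr)=G_\mu\!\left(\tfrac{1}{v}\right)\cdot\tfrac{1}{v}-1=\tfrac{1}{v}G_\mu\!\left(\tfrac{1}{v}\right)-1=\Psi_\mu(v)=z,
\]
which is exactly the claimed equation.

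The hard part is not this computation but justifying the two inversions simultaneously and, crucially, reaching the segment $\{-it\}$. I would use that $\mu$ has finite first moment, so $\Psi_\mu(v)=m_1 v+O(v^2)$ with $m_1>0$ is conformal at $0$; thus $\Psi_\mu^{-1}$ is a genuine inverse on a neighbourhood of $0$ inside $\Psi_\mu(i\C^+)$, and for such $z$ the point $1/v$ lies in a truncated cone $\Gamma_{\alpha,\beta}$ on which $G_\mu$ is univalent with inverse $G_\mu^{-1}$ — precisely the region appearing in the definition of $R_\mu$. This makes the substitution chain rigorous on a nonempty open set $U$ that accumulates at $0$ (for instance along the negative real axis).

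Finally I would observe that $S_\mu$ has a removable singularity at $0$ with $S_\mu(0)=1/m_1$ and that $R_\mu$ is analytic at $0$ with $R_\mu(0)=0$, so both sides of the asserted equation are analytic on a full disk $D$ about $0$. Since they agree on the open set $U\cap D$, they agree throughout the connected $D$; taking $D_\varepsilon$ to be $D$ (or a subregion of it) yields a region on which $z=R_\mu(zS_\mu(z))$ and which contains $\{-it:0<t<\varepsilon\}$. The genuine subtlety I expect to wrestle with is that the negative imaginary segment need not lie in $\Psi_\mu(i\C^+)$ itself (e.g.\ for $\mu=\delta_1$ the domain of $S_\mu$ is the disk $|w+\tfrac12|<\tfrac12$, which misses $-it$), so it is reached only through this continuation — which is exactly why the statement is phrased via an auxiliary region $D_\varepsilon$ rather than the raw domain of $S_\mu$.
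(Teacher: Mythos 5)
The paper itself does not prove this proposition --- it only points to \cite[Lemma 2]{PAS10} --- so there is no in-house argument to measure yours against. Your algebraic core is correct and is the standard derivation: the identity $1+\Psi_\mu(v)=\frac{1}{v}G_\mu(1/v)$, the consequence $zS_\mu(z)=(z+1)\Psi_\mu^{-1}(z)=G_\mu(1/v)$ with $v=\Psi_\mu^{-1}(z)$, and the substitution into $R_\mu(w)=wG_\mu^{-1}(w)-1$ all check out (they reproduce, e.g., $R_{\delta_1}(z)=z$). You have also put your finger on the real subtlety: by part (3) of Lemma 7, $\Psi_\mu(i\C^+)$ sits inside the disk with diameter $\big(\mu(\{0\})-1,0\big)$, which is tangent to the imaginary axis at $0$, so the segment $\{-it\}$ is \emph{never} in the raw domain of $S_\mu$ and some continuation is unavoidable.

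The gap is in how you perform that continuation. For a general $\mu\in\mathcal{P}_+$, $R_\mu$ is defined only on $\{z:\,1/z\in\Gamma_{\alpha,\beta}\}$, a region in $\C^-$ touching $0$, and it need not extend analytically to any full disk about $0$ unless $\mu$ is compactly supported; likewise, when $\mathrm{supp}\,\mu$ is unbounded the singularities $1/x$, $x\in\mathrm{supp}\,\mu$, of $\Psi_\mu$ accumulate at $0$ from the right, so $S_\mu$ does not extend to a punctured disk about $0$ either. Hence ``both sides are analytic on a full disk $D$ and agree on $U\cap D$'' is not available, and the identity theorem must instead be run on a connected region of the shape $\{z:\,1/z\in\Gamma_{\alpha,\beta}\}$ --- which does contain $\{-it:\,0<t<\varepsilon\}$ and is the natural candidate for $D_\varepsilon$ --- after verifying (i) that $S_\mu$ continues analytically there (this uses the conformality of $\Psi_\mu$ at $0$, i.e.\ the finite first moment you invoke but the proposition does not hypothesize) and (ii) that this region meets the open set $U$ on which your direct computation is legitimate. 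On (ii), your proposed witness ``along the negative real axis'' fails: for $z<0$ one gets $v<0$ and $1/v<0$, which is not in the upper-half-plane cone $\Gamma_{\alpha,\beta}$ where $G_\mu^{-1}\circ G_\mu=\mathrm{id}$ is guaranteed; you need $v$ in the lower-left quadrant so that $1/v$ lands in $\Gamma_{\alpha,\beta}$. For the one measure to which the paper actually applies the proposition ($\mathfrak{y}_\alpha$, whose $S$-transform is entire and whose $R$-transform is analytic near $0$) your disk argument is perfectly fine, but as a proof of the statement as written it does not close.
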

 
\section{New limit theorems}
In this section, 
we prove a new limit theorem related 
to both free additive and multiplicative convolutions.
We also discuss a similar result with replacing $\boxplus$ by $\uplus$.
It was proved in \cite{Ml10} by M\l otkwski 
that for the free Poisson law $\pi$, we have
	\[
	D_{n}\left(\left(\pi^{\boxtimes (n-1)}\right)^{\uplus n}\right) 
	\overset{n\to \infty}{\longrightarrow} \nu_0 
    \quad \mbox{in distribution},
	\]
where the $p^{\scriptsize \mbox{th}}$ moment of $\nu_0$ is given by $\dfrac{p^p}{p!}$.
We find that a theorem of this type holds more generally if we replace $\pi$ 
by any probability distribution with finite second moment.

\subsection{Expansion of the $S$-transform and $\Sigma$-transform}
We prove the expansion for the $S$--transform and $\Sigma$--transform 
under the second moment condition.
For the $\mathcal{R}$--transform, the Taylor type expansion was proved by Benaych-Georges 
in \cite{BG06}. 
For each region $A$ in $\C$,
we denote $z \to 0$ with $z \in A$
by $z \overset{z \in A}{\longrightarrow} 0$.
\begin{lem}\label{lemM}
	Let $\rho \in \mathcal{P}_+$ have the moment of order $p$, that is,
	for $k=0,1,2,\ldots ,p$,
		\[
		m_{k}(\rho):=\int_{\R_+} x^k \rho (dx) < \infty.
		\]
	Then its moment generating function $\Psi_{\rho}(z)$ has a Taylor expansion
		\begin{align*}
		\Psi_{\rho}(z) = \sum_{k=1}^{p-1} m_{k}(\rho) z^k + O(z^{p}), 
		\quad z \overset{z \in i\C^+}{\longrightarrow} 0.
		\end{align*} 
\end{lem}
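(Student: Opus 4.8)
The plan is to expand the integrand of $\Psi_{\rho}$ as a finite geometric sum with an exact remainder, integrate termwise against $\rho$ using only the finiteness of the moments up to order $p$, and then bound the remainder by exploiting that $z$ lies in the left half--plane. First I would record the purely algebraic identity, valid for every $x\geq 0$ and every $z$ with $xz\neq 1$,
\[
\frac{xz}{1-xz}=\sum_{k=1}^{p-1}(xz)^{k}+\frac{(xz)^{p}}{1-xz},
\]
which is just the partial--sum formula for the geometric series together with its exact tail. Integrating both sides against $\rho(dx)$ and using that $m_{k}(\rho)<\infty$ for each $k\leq p-1$, linearity of the integral gives at once
\[
\Psi_{\rho}(z)=\sum_{k=1}^{p-1}m_{k}(\rho)\,z^{k}+R_{p}(z),
\qquad
R_{p}(z)=z^{p}\int_{\R_+}\frac{x^{p}}{1-xz}\,\rho(dx).
\]
Because this is a finite sum plus an exact remainder, no interchange of an infinite series with the integral is required, so the algebraic step is entirely routine.

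The crux is to show $R_{p}(z)=O(z^{p})$, i.e.\ that $\int_{\R_+}\frac{x^{p}}{1-xz}\,\rho(dx)$ stays bounded as $z\to 0$ with $z\in i\C^{+}$. Here I would use that $z\in i\C^{+}$ forces $\mathrm{Re}(z)<0$: writing $z=iw$ with $\mathrm{Im}(w)>0$ gives $\mathrm{Re}(z)=-\mathrm{Im}(w)<0$. Since $\mathrm{supp}(\rho)\subseteq\R_+$, for every $x\geq 0$ we then have $\mathrm{Re}(1-xz)=1-x\,\mathrm{Re}(z)\geq 1$, whence $|1-xz|\geq\mathrm{Re}(1-xz)\geq 1$. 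Consequently
\[
|R_{p}(z)|\leq|z|^{p}\int_{\R_+}\frac{x^{p}}{|1-xz|}\,\rho(dx)
\leq|z|^{p}\int_{\R_+}x^{p}\,\rho(dx)=m_{p}(\rho)\,|z|^{p},
\]
which is exactly $O(z^{p})$ — in fact uniformly over the whole half--plane, not merely near the origin.

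The main obstacle is really just this uniform lower bound on the denominator $|1-xz|$; everything else is bookkeeping. The hypothesis $z\in i\C^{+}$ is precisely what makes it work, since it keeps $1-xz$ in the right half--plane uniformly in $x\in\mathrm{supp}(\rho)\subseteq\R_+$ and thereby prevents the integrand from blowing up as $x\to\infty$. Were $z$ allowed to approach $0$ along a direction with positive real part, the factor $1-xz$ could vanish on the support of $\rho$ and the estimate would fail; so the half--plane restriction is not a technicality but exactly the feature guaranteeing that the remainder is genuinely of order $z^{p}$.
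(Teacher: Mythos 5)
Your proof is correct, and it starts from the same exact decomposition as the paper: both write $\Psi_{\rho}(z)-\sum_{k=1}^{p-1}m_{k}(\rho)z^{k}=\int_{\R_+}\frac{x^{p}z^{p}}{1-xz}\,\rho(dx)$. Where you diverge is in handling the remainder. The paper notes the pointwise bound $\bigl|\frac{x^{p}z^{p}}{1-xz}\bigr|<|xz|^{p}$ ``for $z$ with sufficiently small $|z|$'' (a threshold that, as stated, depends on $x$) and then invokes dominated convergence to conclude that $z^{-p}$ times the remainder tends to $m_{p}(\rho)$ --- a slightly stronger conclusion than the stated $O(z^{p})$, since it identifies the leading coefficient of the tail. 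You instead observe that $z\in i\C^{+}$ forces $\mathrm{Re}(z)<0$, hence $|1-xz|\ge \mathrm{Re}(1-xz)\ge 1$ uniformly in $x\ge 0$, which yields the explicit and uniform estimate $|R_{p}(z)|\le m_{p}(\rho)|z|^{p}$ with no limit theorem at all. This is cleaner and arguably tightens the paper's argument: the uniform lower bound $|1-xz|\ge 1$ is precisely the dominating function the paper's DCT step implicitly needs, and your remark about why the half-plane restriction is essential (otherwise $1-xz$ could vanish on the support of $\rho$) correctly isolates the role of the hypothesis. The only thing you give up is the sharper asymptotic $R_{p}(z)=m_{p}(\rho)z^{p}+o(z^{p})$, which the paper's proof delivers but the lemma does not require.
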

\begin{proof}
See \cite{Ak69}.
\end{proof}
\begin{lem}\label{lem}
	Let $\rho \in \mathcal{P}_+$ have the moment of order $p\geq 2$
	 and $\rho \not = \delta_0$.
	 Then we have the followings:
	\begin{enumerate}
		\item[\rm{(1)}] $\Psi_{\rho}(z)$ is univalent in $i\C^+$.
		\item[\rm{(2)}] The inverse function 
	$\Psi^{-1}_{\rho} : \Psi_{\rho}(i\C^+) \to i\C^+$ 
	of $\Psi_{\rho}$ 
	admits Taylor type expansion of order 2
	\begin{align*}
		\Psi^{-1}_{\rho} (z) = \frac{1}{m_1(\rho)} z -\dfrac{m_{2}(\rho)}{(m_{1}(\rho))^{3}} z^{2} + o(z^2),  \quad
		z \overset{z\in \mathfrak{D}_{\rho}}{\longrightarrow} 0.
	\end{align*}
		\item[\rm{(3)}] $\mathfrak{D}_{\rho}:=\Psi_{\rho}(i\C^+)$ is a region contained in the circle with diameter
		$\big(\,\rho(\{0\})-1, \,0\,\big)$.
	In addition,  $\Psi_{\rho}(i\C^+) \cap \R= \big(\,\rho(\{0\})-1,\, 0 \,\big)$,
	\[
	\lim_{t\uparrow 0}\Psi^{-1}_{\rho}(t)=0
	\] 
	and 
	\[
	\lim_{t\downarrow \rho(\{0\})-1}\Psi^{-1}_{\rho}(t)=\infty.
	\]
	\end{enumerate}
\end{lem}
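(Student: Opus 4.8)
The plan is to establish the three assertions in the order (1), (3), (2), since the description of the image in (3) relies on the univalence proved in (1), and the inversion in (2) uses both.

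For (1), I would start from the elementary difference identity. Using $\tfrac{1}{1-xz}=1+\tfrac{xz}{1-xz}$, so that $1+\Psi_\rho(z)=\int_{\R_+}\frac{1}{1-xz}\rho(dx)$, a direct computation gives, for $z_1,z_2\in i\C^+$,
\[
\Psi_\rho(z_1)-\Psi_\rho(z_2)=(z_1-z_2)\int_{(0,\infty)}\frac{x\,\rho(dx)}{(1-xz_1)(1-xz_2)} .
\]
Thus injectivity reduces to showing the integral $J$ does not vanish. The geometric point is that for $z_j\in i\C^+$ (that is, $\mathrm{Re}\,z_j<0$) and $x>0$, the point $1-xz_j$ runs, as $x$ ranges over $(0,\infty)$, along a ray issuing from $1$ in the direction $-z_j$ that stays in the open right half-plane; hence $\arg(1-xz_j)$ moves monotonically inside the interval bounded by $0$ and $\arg(-z_j)\in(-\pi/2,\pi/2)$. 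Therefore $\arg\big[(1-xz_1)(1-xz_2)\big]$ remains within an interval of length strictly less than $\pi$, so all values of the integrand lie in one common open half-plane. Since $\rho\neq\delta_0$ forces $\rho((0,\infty))>0$, an integral of a nonzero positive measure against vectors confined to an open half-plane cannot be $0$; hence $J\neq0$, $\Psi_\rho$ is injective, and analyticity gives univalence.

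For (3), I would again use $1+\Psi_\rho(z)=\int\frac{1}{1-xz}\rho(dx)$ and note that for fixed $x>0$ the Möbius map $z\mapsto(1-xz)^{-1}$ sends $i\C^+$ into the open disk $D=\{|\zeta-\tfrac12|<\tfrac12\}$, while $x=0$ contributes the boundary point $1$. As a $\rho$-average of points of $\overline D$, and after separating the atom at $0$ (weight $\rho(\{0\})$ placed at $1$), the value $1+\Psi_\rho(z)$ lies in the disk of diameter $(\rho(\{0\}),1)$; subtracting $1$ yields the claimed disk of diameter $(\rho(\{0\})-1,0)$. Evaluating on $z=t\in(-\infty,0)$ shows $\Psi_\rho(t)$ is real, monotone, with $\Psi_\rho(t)\to0$ as $t\uparrow0$ and $\Psi_\rho(t)\to\rho(\{0\})-1$ as $t\downarrow-\infty$ by dominated convergence; this produces the two boundary limits of $\Psi_\rho^{-1}$. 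That no non-real $z\in i\C^+$ maps to a real value follows from Schwarz reflection $\overline{\Psi_\rho(z)}=\Psi_\rho(\bar z)$ combined with injectivity from (1), which forces $z=\bar z$. Hence $\mathfrak{D}_\rho\cap\R=(\rho(\{0\})-1,0)$.

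For (2), injectivity makes $\Psi_\rho^{-1}$ single-valued on $\mathfrak{D}_\rho$; moreover $\Psi_\rho'(z)\to m_1(\rho)>0$ as $z\to0$ (dominated convergence, using $|1-xz|\geq1$ on $i\C^+$), so $\Psi_\rho$ is a local homeomorphism at the boundary point $0$ and $\Psi_\rho^{-1}(w)\to0$ as $w\to0$ in $\mathfrak{D}_\rho$. By Lemma~\ref{lemM} and its proof, $\Psi_\rho(z)=\sum_{k=1}^{p}m_k(\rho)z^k+o(z^p)$ as $z\to0$ in $i\C^+$. Writing $z=\Psi_\rho^{-1}(w)$ and substituting the ansatz $z=\sum_{k=1}^{p}b_k w^k+o(w^p)$, I would match coefficients order by order; the first two relations $b_1m_1=1$ and $b_1m_2+b_2m_1^2=0$ give $b_1=1/m_1(\rho)$ and $b_2=-m_2(\rho)/m_1(\rho)^3$. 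The main obstacle is exactly the rigorous control of the $o(w^p)$ remainder: because $\rho$ has only finitely many moments, $\Psi_\rho$ is not analytic at $0$ but merely carries an asymptotic Taylor polynomial, so I cannot invert a convergent series and must instead justify that inverting the degree-$p$ Taylor polynomial reproduces the degree-$p$ asymptotics of $\Psi_\rho^{-1}$ with an $o(w^p)$ error. Here the comparability $z\asymp w$ forced by $w=m_1(\rho)z(1+o(1))$, together with an induction on the order, does the real work, since it makes $o(z^p)$ and $o(w^p)$ interchangeable through the composition.
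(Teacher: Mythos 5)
Your proposal is correct, but it follows a genuinely different route from the paper in two respects. First, for parts (1) and (3) the paper gives no argument at all: it simply cites Bercovici--Voiculescu \cite[Proposition 6.2]{BeVo93}. You instead prove them from scratch, via the difference identity $\Psi_\rho(z_1)-\Psi_\rho(z_2)=(z_1-z_2)\int x\,(1-xz_1)^{-1}(1-xz_2)^{-1}\rho(dx)$ and the half-plane/argument estimate for injectivity, and via the M\"obius image of $i\C^+$ under $z\mapsto(1-xz)^{-1}$ together with monotonicity on $(-\infty,0)$ and Schwarz reflection for the description of $\mathfrak{D}_\rho$; these are the standard arguments and they are sound, so your write-up is self-contained where the paper is not. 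Second, for part (2) the paper runs an induction through the derivative identity $(\Psi_\rho^{-1})'=1/\bigl(\Psi_\rho'\circ\Psi_\rho^{-1}\bigr)$, composing the expansion of $\Psi_\rho'$ with the inductively known expansion of $\Psi_\rho^{-1}$ and then integrating to gain one order, whereas you invert the degree-$p$ Taylor polynomial directly and control the remainder via the comparability $z\asymp w$ forced by $w=m_1(\rho)z(1+o(1))$. Your version is arguably cleaner (inverting the polynomial $P$ by its analytic local inverse $Q$ and writing $z=Q(w)+o(w^p)$ avoids differentiating an asymptotic expansion, which the paper does without justification), though your remainder discussion is left at roughly the same level of informality as the paper's. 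Both you and the paper also pass over the preliminary fact that $\Psi_\rho^{-1}(w)\to 0$ as $w\to 0$ in $\mathfrak{D}_\rho$ (the paper hides it in the choice of the path $\omega(t)$, you in the phrase ``local homeomorphism at the boundary point $0$''); this deserves a sentence, e.g.\ ruling out accumulation of $\Psi_\rho^{-1}(w)$ at $\infty$ or at a nonzero boundary point using the limits in (3) and the continuous extension of $\Psi_\rho$ to $i\R$. Your coefficient relations, read off from the composition $z=Q(P(z))$, give the same $b_1$ and $b_2$ as the paper's.
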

\begin{proof}
(1) and (3) are proved in Bercovici and Voiculescu \cite[Proposition 6.2]{BeVo93}.

(2) \underline{{\bf Step 1}} 
We shall first prove that
\begin{align*}
\Psi^{-1}_{\rho} (z) = \frac{1}{m_1(\rho)} z + o(z),  \quad
z \overset{z\in \mathfrak{D}_{\rho}}{\longrightarrow} 0.
\end{align*}
Take any continuous path $\{z(t)\}_{t\in (0,1]}$ 
in $\mathfrak{D}_{\rho}$ such that $\lim_{t\downarrow 0}z(t)=0$.
By (1),
we can choose a unique continuous path $\{\omega(t)\}_{t\in[0,1]}$
on $t\in (0,1]$
such that 
$\lim_{t\downarrow 0}\omega(t)=0$
and
$\Psi_{\rho}(\omega(t))=z(t)$.
\begin{align*}
\lim_{t\downarrow 0}\frac{\Psi^{-1}_{\rho}(z(t))}{z(t)} 
= \lim_{t\downarrow 0} \frac{\omega(t)}{\Psi_{\rho}(\omega(t))}
= \lim_{t\downarrow 0} \frac{1}{\Psi_{\rho}(\omega(t))/\omega(t)}
= \frac{1}{m_1}.
\end{align*}
By arbitrary of the paths, it follows that
\begin{align*}
\Psi^{-1}_{\rho} (z) = \frac{1}{m_1(\rho)} z + o(z),  \quad
z \overset{z\in \mathfrak{D}_{\rho}}{\longrightarrow} 0.
\end{align*}

\underline{{\bf Step 2}} 
Using step $1$, we shall show the Taylor type expansion of order $2$
as $z \overset{z\in \mathfrak{D}_{\rho}}{\longrightarrow} 0$.
\begin{align*}
& \quad\quad \frac{\Psi_{\rho}^{-1}(\Psi_{\rho} (z)) - \frac{1}{m_1(\rho)}\Psi_{\rho} (z)}{\Psi_{\rho} (z)^{2}}
= \frac{z-\frac{1}{m_1(\rho)}(m_1(\rho)z + m_{2}(\rho)z^{2} + O(z^{3}))}{(m_1(\rho)z + m_{2}(\rho)z^{2} + O(z^{3}))^{2}}&\\
&=\frac{\left(\frac{m_{2}(\rho)}{m_1(\rho)}z^{2} + O(z^{3})\right)}{(m_1(\rho))^{2}z^{2} + O(z^{3})}
\to \frac{m_{2}(\rho)}{(m_1(\rho))^{3}} , 
\quad \text{as }z \overset{z\in \mathfrak{D}_{\rho}}{\longrightarrow} 0. &
\end{align*}
As a results, we obtain as follow:
\begin{align*}
\Psi^{-1}_{\rho} (z) = \frac{1}{m_1(\rho)} z + \frac{m_{2}(\rho)}{(m_1(\rho))^{3}} z^{2}+o(z^{2}),  \quad
z \overset{z\in \mathfrak{D}_{\rho}}{\longrightarrow} 0.
\end{align*}
\end{proof}
\subsection{Limit theorems}
Here we shall state the main theorem.
\begin{thm}\label{maintheorem}
We assume that 
$\rho \in \mathcal{P}_+$ has the second moment
and put
$\gamma = \frac{\mathrm{Var}(\rho)}{(m_{1}(\rho))^{2}}$.
\begin{itemize}
\item[\rm{(1)}] 
There exist $s_0>0$ and $s_1<0$ such that
the $S$--transform of $\rho$ is given by
\begin{align*}
S_{\rho}(z) = s_0 +  s_{1}z + o(z),   \quad
z \overset{z\in \mathfrak{D}_{\rho}}{\longrightarrow} 0,
\end{align*} 
and there exists a probability measure 
$\mathfrak{y}_{\gamma} \in \mathcal{P}_+$ 
such that
\begin{align*}
D_{s_0^n/n}\left(\left(\rho^{\boxtimes n}\right)^{\boxplus n}\right) 
\to \mathfrak{y}_{\gamma} \quad \mbox{{\rm in distribution}}.
\end{align*}
In addition, the $S$--transform of the limit distribution $\mathfrak{y}_{\gamma}$ is
\begin{align*}
S_{\mathfrak{y}_{\gamma}}(z) = \exp\left(-\gamma  z \right).
\end{align*} 

\item[\rm{(2)}]
There exist $\sigma_0>0$ and $\sigma_1<0$ such that
the $\Sigma$--transform of $\rho$ is given by
\begin{align*}
\Sigma_{\rho}(z) = \sigma_0 +  \sigma_{1}z + o(z),  \quad
z \overset{z\in \mathfrak{D}_{\rho}}{\longrightarrow} 0,
\end{align*} 
and there exists a probability measure 
$\mathfrak{s}_{\gamma} \in \mathcal{P}_+$ 
such that
\begin{align*}
D_{s_0^n/n}\left(\left(\rho^{\boxtimes n-1}\right)^{\uplus n}\right) 
\to \mathfrak{s}_{\gamma}  \quad \mbox{{\rm in distribution}}.
\end{align*}
In addition, the $\Sigma$--transform of the limit distribution $\mathfrak{s}_{\gamma}$ is
\begin{align*}
\Sigma_{\mathfrak{s}_{\gamma}}(z) = \exp\left(-\gamma  z \right).
\end{align*}
\end{itemize} 
\end{thm}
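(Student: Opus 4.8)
The plan is to reduce the statement to an elementary limit of $S$--transforms and then upgrade that limit to weak convergence by a tightness argument. For part (1), write $\mu_n=D_{s_0^n/n}\big((\rho^{\boxtimes n})^{\boxplus n}\big)$ and combine the multiplicativity $S_{\rho^{\boxtimes n}}=(S_\rho)^n$, the free--power rule $S_{\eta^{\boxplus n}}(z)=\frac1n S_\eta(z/n)$ applied to $\eta=\rho^{\boxtimes n}$, and the dilation rule $S_{D_c(\mu)}=\frac1c S_\mu$ of Proposition~\ref{basic} and the surrounding discussion. These give
\begin{align*}
S_{\mu_n}(z)=\frac{n}{s_0^{\,n}}\cdot\frac1n\big(S_\rho(z/n)\big)^n=\left(\frac{S_\rho(z/n)}{s_0}\right)^{n}.
\end{align*}
Thus the whole problem is transported to the asymptotics of a single scalar expression.

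First I would insert the expansion $S_\rho(w)=s_0+s_1w+o(w)$ as $w\to0$ from Theorem~\ref{Taylor}, with $s_0=1/m_1(\rho)>0$ and $s_1=-\mathrm{Var}(\rho)/m_1(\rho)^3<0$, valid for $w$ on the real interval $\mathfrak{D}_\rho\cap\R=(\rho(\{0\})-1,0)$, which contains a left neighborhood of $0$ for every $n$. Since $s_1/s_0=-\mathrm{Var}(\rho)/m_1(\rho)^2=-\alpha$, we get $S_\rho(z/n)/s_0=1+(-\alpha)(z/n)+o(1/n)$ and hence $S_{\mu_n}(z)\to\exp(-\alpha z)$ as $n\to\infty$. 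A normal--families (Vitali) argument, using that the $S_{\mu_n}$ are uniformly bounded on compact subsets of a fixed region, promotes this to locally uniform convergence.

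The main obstacle is the passage from convergence of transforms to weak convergence of measures, and this is where I would spend the effort. I would control the first moment directly: it is multiplicative under $\boxtimes$ and additive under $\boxplus$, so $m_1\big((\rho^{\boxtimes n})^{\boxplus n}\big)=n\,m_1(\rho)^n$, and after dilating by $s_0^n/n=m_1(\rho)^{-n}/n$ the measure $\mu_n$ has first moment exactly $1$ for all $n$. Since the $\mu_n$ live on $\R_+$, Markov's inequality gives $\mu_n([R,\infty))\le 1/R$, so $\{\mu_n\}$ is tight. Any weak subsequential limit $\mathfrak{y}$ then lies in $\mathcal{P}_+$, has first moment $1$, and in particular $\mathfrak{y}\neq\delta_0$; the easy direction of the Bercovici--Voiculescu continuity theorem (weak convergence forces $\Psi_{\mu_n}\to\Psi_{\mathfrak{y}}$ pointwise on $i\C^+$, whence $\Psi^{-1}$ and the $S$--transform converge on a common subdomain) identifies $S_{\mathfrak{y}}=\exp(-\alpha z)$. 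Because the $S$--transform determines a measure in $\mathcal{P}_+\setminus\{\delta_0\}$ uniquely, all subsequential limits coincide, the full sequence converges, and $\mathfrak{y}_{\alpha}:=\mathfrak{y}$ is produced together with the asserted transform; note that existence of $\mathfrak{y}_\alpha$ is thereby a byproduct rather than something to be verified separately. The genuinely delicate points are the $n$--independence of the domain on which the transforms are compared and the boundary estimates underlying the inversion $\Psi_{\mu_n}\mapsto\Psi^{-1}_{\mu_n}$.

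Part (2) runs in complete parallel with the $\Sigma$--transform replacing the $S$--transform: from $\Sigma_{\rho_1\boxtimes\rho_2}=\Sigma_{\rho_1}\Sigma_{\rho_2}$ and $\Sigma_{\eta^{\uplus n}}(z)=\frac1n\Sigma_\eta(z/n)$ one obtains, once the dilation is chosen to cancel the leading factor $\sigma_0^{\,n-1}=s_0^{\,n-1}$, that the $\Sigma$--transform of the normalized measure equals $\big(\Sigma_\rho(z/n)/\sigma_0\big)^{n-1}$, and Theorem~\ref{Taylor}(2) gives $\sigma_1/\sigma_0=-\alpha$, so the limit is again $\exp(-\alpha z)$. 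The first moment is additive under $\uplus$ as well, so the same tightness bound holds, and the identical continuity plus uniqueness argument yields $\mathfrak{s}_{\alpha}$ with $\Sigma_{\mathfrak{s}_{\alpha}}=\exp(-\alpha z)$.
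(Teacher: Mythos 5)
Your computation of the transforms is exactly the one the paper performs: Proposition \ref{basic} reduces everything to $\bigl(S_\rho(z/n)/s_0\bigr)^n$, and Theorem \ref{Taylor} with $s_1/s_0=-\alpha$ gives the limit $\exp(-\alpha z)$. Where you genuinely diverge is the last step. The paper simply invokes \cite{BeVo92} and \cite[Theorem 6.13(i)]{BeVo93} to produce a $\boxtimes$--infinitely divisible measure with $S$--transform $\exp(-\alpha z)$ and leaves the passage from convergence of $S$--transforms to convergence in distribution implicit; you instead build the limit by hand via tightness and subsequential limits. Your route is more self-contained and makes visible what the citation hides, at the cost of having to handle the continuity-theorem machinery yourself. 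Two remarks on your version. First, your claim that a weak subsequential limit has first moment $1$ (used to rule out $\delta_0$) needs uniform integrability, not just $m_1(\mu_n)=1$; this is easily supplied here because a short computation with Theorem \ref{Taylor} shows $\mathrm{Var}(\mu_n)=\alpha$ for every $n$, so $m_2(\mu_n)=1+\alpha$ is uniformly bounded, but you should say so. Second, in part (2) you quietly renormalize: with the dilation $D_{s_0^n/n}$ as literally stated one gets $\frac{1}{s_0}\bigl(\Sigma_\rho(z/n)/\sigma_0\bigr)^{n-1}\to\frac{1}{s_0}e^{-\alpha z}$, so the exponent mismatch ($n$ versus $n-1$ factors of $\Sigma_\rho$) means the stated scaling only yields $\Sigma_{\mathfrak{s}_\alpha}(z)=e^{-\alpha z}$ when $s_0=1$; your choice of cancelling $\sigma_0^{\,n-1}$ is the one that actually produces the asserted limit, and it is worth flagging this explicitly rather than absorbing it silently.
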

\begin{proof}
Using Lemma \ref{lem}, we have
$$
S_{\rho}(z) = \frac{z+1}{z} \Psi^{-1}_{\rho}(z)=\frac{1}{m_1(\rho)}  - \frac{\mathrm{Var}(\rho)}{(m_1(\rho))^{3}} z+o(z),  \quad
z \overset{z\in \mathfrak{D}_{\rho}}{\longrightarrow} 0.
$$
Let $s_{0} = \frac{1}{m_1(\rho)}$ and $s_{1}=- \frac{\mathrm{Var}(\rho)}{(m_1(\rho))^{3}}$.
By Proposition \ref{basic}, we obtain
\begin{align*}
S_{D_{s_0^n/n}\left(\left(\rho^{\boxtimes n}\right)^{\boxplus n}\right)}(z) 
&=\frac{n}{s_{0}^{n}}S_{ (\rho^{\boxtimes n})^{\boxplus n}}(z)
=\frac{1}{s_{0}^{n}}S_{\rho^{\boxtimes n}}\left(\frac{z}{n}\right)&\\
&=\frac{1}{s_{0}^{n}}\left( S_{\rho}\left(\frac{z}{n}\right)\right)^{n}
=\frac{1}{s_{0}^{n}}\left(  s_{0} + s_{1}\frac{z}{n} +o\left(\frac{1}{n}\right)   \right)^{n}&\\
&=\left( 1 + \frac{s_1 z}{s_{0} n} + o\left(\frac{1}{n}\right) \right)^n &\\
&\to \exp\left(\frac{s_1}{s_0} z \right)= \exp\left(-\gamma  z \right)
\quad\text{as }n\to\infty.&
\end{align*} 
From \cite[Lemma 7.1]{BeVo92} and \cite[Theorem 6.13 (ii)]{BeVo93}, there exists a free multiplicative infinitely divisible measure $\mathfrak{y}_{\gamma}$ 
such that $S_{\mathfrak{y}_{\gamma}}(z) = \exp\left(-\gamma z \right)$.
The proof for (2) is the same as for the free additive case.
\end{proof}
We can exchange the order of free multiplicative and freely additive (or boolean additive)
convolutions. The difference is in the scaling speed.
\begin{cor}
Under the same setting as in Theorem \ref{maintheorem},
we have
\begin{align*}
&D_{s_0^n/n^{n}}\left(\left(\rho^{\boxplus n}\right)^{\boxtimes n}\right) 
\to \mathfrak{y}_{\gamma},&\\
&D_{s_0^n/n^{n}}\left(\left(\rho^{\uplus n-1}\right)^{\boxtimes n}\right) 
\to \mathfrak{s}_{\gamma},&
\end{align*}
as $n \to \infty$.
\end{cor}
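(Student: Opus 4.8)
The plan is to mirror the proof of Theorem~\ref{maintheorem}, the only genuinely new ingredient being the bookkeeping of the dilation constant once the order of the two convolutions is reversed. For the first identity I would compute the $S$-transform of the normalized measure directly. Because $\boxtimes$ renders the $S$-transform multiplicative and $\rho^{\boxplus n}$ has $S$-transform $\tfrac{1}{n}S_\rho(z/n)$ by Proposition~\ref{basic},
\[
S_{(\rho^{\boxplus n})^{\boxtimes n}}(z)=\bigl(S_{\rho^{\boxplus n}}(z)\bigr)^n=\frac{1}{n^n}\Bigl(S_\rho\bigl(\tfrac{z}{n}\bigr)\Bigr)^n.
\]
Applying $D_{s_0^n/n^n}$ together with the dilation rule $S_{D_c(\mu)}=\tfrac{1}{c}S_\mu$ cancels the factor $n^n$ exactly and leaves $\tfrac{1}{s_0^n}\bigl(S_\rho(z/n)\bigr)^n$, which is precisely the expression appearing in the proof of Theorem~\ref{maintheorem}(1). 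This is the heart of the matter: reversing the order turns the single dilation factor $1/n$ into $1/n^n$, since the $\boxtimes$-power raises the whole transform to the $n$-th power, and that is exactly why the scaling speed must change from $n$ to $n^n$.

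I would then insert the Taylor expansion of Theorem~\ref{Taylor}, $S_\rho(w)=s_0+s_1 w+o(w)$ with $s_1/s_0=-\alpha$, to obtain
\[
\frac{1}{s_0^n}\Bigl(s_0+\frac{s_1 z}{n}+o(1/n)\Bigr)^n=\Bigl(1+\frac{s_1 z}{s_0 n}+o(1/n)\Bigr)^n\overset{n\to\infty}{\longrightarrow}\exp\Bigl(\frac{s_1}{s_0}z\Bigr)=e^{-\alpha z}=S_{\mathfrak{y}_\alpha}(z).
\]
Since the limit agrees with the $S$-transform of the measure $\mathfrak{y}_\alpha$ already constructed in Theorem~\ref{maintheorem}, convergence of the $S$-transforms on a neighbourhood of the origin gives convergence in distribution to $\mathfrak{y}_\alpha$, via the same Bercovici--Voiculescu continuity results (\cite{BeVo92}, \cite[Theorem~6.13]{BeVo93}) used there.

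For the second identity the computation is formally identical with $\Sigma$ in place of $S$, $\uplus$ in place of $\boxplus$, and the rules $\Sigma_{\rho_1\boxtimes\rho_2}=\Sigma_{\rho_1}\Sigma_{\rho_2}$ and $\Sigma_{\rho^{\uplus t}}(z)=\tfrac{1}{t}\Sigma_\rho(z/t)$ replacing their free counterparts: one finds $\Sigma_{(\rho^{\uplus(n-1)})^{\boxtimes n}}(z)=(n-1)^{-n}\bigl(\Sigma_\rho(z/(n-1))\bigr)^n$, applies the dilation, inserts $\Sigma_\rho(w)=\sigma_0+\sigma_1 w+o(w)$ with $\sigma_1/\sigma_0=-\alpha$, and passes to the limit to recover $\Sigma_{\mathfrak{s}_\alpha}(z)=e^{-\alpha z}$, hence convergence in distribution to $\mathfrak{s}_\alpha$.

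The step I expect to require the most care is the normalization in the boolean case, not the passage to the limit itself. First one must verify that $\rho^{\uplus(n-1)}$ stays in $\mathcal{P}_+$, so that both the $\boxtimes$-power and the $\Sigma$-transform are meaningful. Second, the exponent $n-1$ in $(\rho^{\uplus(n-1)})^{\boxtimes n}$ does not match the $(n-1)^{-n}$ prefactor in the naive way, so I would have to track the quotient $(n/(n-1))^n$ and the discrepancy between $(\,\cdots)^{n}$ and $s_0^{n}$ when letting $n\to\infty$; this is exactly where a spurious constant (a dilation of the limit) could slip in if one too hastily replaced $n-1$ by $n$. Everything else is a direct transcription of the argument for Theorem~\ref{maintheorem}.
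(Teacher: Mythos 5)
Your computation for the first limit is exactly the argument the paper intends (its own proof of this corollary is a single sentence deferring to Proposition \ref{basic} and Theorem \ref{Taylor}): $S_{(\rho^{\boxplus n})^{\boxtimes n}}(z)=n^{-n}\bigl(S_\rho(z/n)\bigr)^n$, the dilation cancels $n^{-n}$, and $\bigl(1+\tfrac{s_1z}{s_0n}+o(1/n)\bigr)^n\to e^{-\alpha z}=S_{\mathfrak{y}_\alpha}(z)$; that half is complete and correct.

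For the second limit, however, the difficulty you flag at the end is real and you cannot leave it as ``a step requiring care'': with the stated normalization one gets
$\Sigma_{D_{s_0^n/n^n}\left((\rho^{\uplus(n-1)})^{\boxtimes n}\right)}(z)
=\bigl(\tfrac{n}{n-1}\bigr)^n\bigl(\Sigma_\rho(z/(n-1))/s_0\bigr)^n$,
and since $\bigl(\tfrac{n}{n-1}\bigr)^n\to e$ while the second factor tends to $e^{-\alpha z}$, the limit of the $\Sigma$--transforms is $e\cdot e^{-\alpha z}$, i.e.\ the limit measure is the dilate $D_{1/e}(\mathfrak{s}_\alpha)$ rather than $\mathfrak{s}_\alpha$ itself. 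So either the scaling must be changed to $s_0^n/(n-1)^n$ or the boolean power to $\uplus\, n$; the paper's one-line proof glosses over this, and an analogous $n$ versus $n-1$ (and $s_0^n$ versus $s_0^{n-1}$) bookkeeping slip is already present in the statement of Theorem \ref{maintheorem}(2), which as written produces $\tfrac{1}{s_0}e^{-\alpha z}$ unless $m_1(\rho)=1$. Your other flagged worry, that $\rho^{\uplus(n-1)}$ must remain in $\mathcal{P}_+$ for the $\boxtimes$--power to make sense, is also legitimate and is not addressed by the paper either. To turn your write-up into a proof you should carry out the $\bigl(\tfrac{n}{n-1}\bigr)^n$ computation explicitly and restate the corollary with the corrected scaling (or record the extra dilation by $e$).
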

\begin{proof}
As we have done in the proof of Theorem \ref{maintheorem},
it can be proved by using Proposition \ref{basic} and Lemma \ref{lem}.
\end{proof}
\section{Lambert $W$ function and infinite divisibility of the limit distribution}
\subsection{On the limit distribution of free case}
When we calculate the $R$--transform or the moment generating function, 
the Lambert's $W$--function plays an important role, which
satisfies the functional equation
\[
z=W(z)\exp (W(z)).
\]
This function have been studied for a long period and we have known several good properties of this function as real and complex function.
For more details of the Lambert  $W$ function, see, 
for instance, \cite{Cetc96}.
Let $W_{0}(z)$ be the principal branch of the Lambert $W$--function.

By Proposition \ref{Scum} and the $S$-transform of $\mathfrak{y}_{\gamma}$, 
we have 
\[
R_{\mathfrak{y}_{\gamma}}(z\mathrm{e}^{-\gamma z})=z , \quad 1/z \in \Gamma_{\alpha,\beta}.
\]
This functional equation suggests that the $R$--transform is given by using the Lambert's $W$--function.

\begin{thm}
\begin{enumerate}
	\item[\rm{(1)}] The $\mathcal{R}$ and $R$--transforms
	of probability measure $\mathfrak{y}_{\gamma}$ 
	are given as follows:
	\begin{align*}
	&\mathcal{R}_{\mathfrak{y}_{\gamma}}(z) 
	=\frac{-W_{0}(-\gamma z)}{\gamma z},&\\
	&R_{\mathfrak{y}_{\gamma}}(z) = -\frac{1}{\gamma}W_{0}(-\gamma z) .&
	\end{align*}

	\item[\rm{(2)}]
	$\mathfrak{y}_{\gamma}$ is both $\boxplus$--infinitely divisible and $\boxtimes$--infinitely divisible.
	\item[\rm{(3)}]
	The free cumulant sequence of 
	$\mathfrak{y_{\gamma}}$ is $\left\{ \dfrac{(\gamma n)^{n-1}}{n!}\right\}_{n\in\N}$.

	\item[\rm{(4)}]
	
	The L\'evy measure $\nu_{\mathfrak{y}_{\gamma}}$ 
	of $\mathfrak{y}_{\gamma}$ is given by
		\[
		\nu_{\mathfrak{y}_{\gamma}}(ds) 
		=\frac{1}{\gamma\pi}sf^{-1}(\gamma /s)1_{[0,\gamma \mathrm{e}]}(s)ds,
		\] 
		where $f(u)=u\csc u \exp (-u\cot u)$.
		In case of $\gamma=1$, 
		for the shape of the density of 
		$\nu_{\mathfrak{y}_{1}}$,
		 see the graph below.\\
	\includegraphics{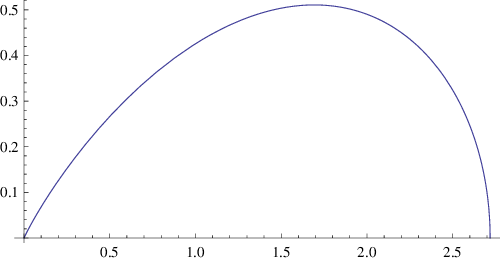}\\
	\item[\rm{(5)}]
	It holds the following formulas:
	\begin{align*}
	&\mathfrak{y}_{\gamma}^{\boxplus t}
	= D_{t}(\mathfrak{y}_{\gamma}^{\boxtimes 1/t}),&\\
	&\mathfrak{y}_{\gamma}^{\boxtimes t} 
	= D_{t}(\mathfrak{y}_{\gamma}^{\boxplus 1/t}).&
	\end{align*}
\end{enumerate}
\end{thm}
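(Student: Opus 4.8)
The plan is to derive every part from the single explicit datum $S_{\mathfrak{y}_{\alpha}}(z)=\exp(-\alpha z)$ furnished by Theorem~\ref{maintheorem}, using the Lambert $W$ function as the bridge between the multiplicative and additive descriptions. For part (1) I would start from the functional equation $z=R_{\mu}(zS_{\mu}(z))$ of Proposition~\ref{Scum}. Setting $w=zS_{\mathfrak{y}_{\alpha}}(z)=z\exp(-\alpha z)$ and substituting $u=-\alpha z$ gives $-\alpha w=ue^{u}$, so by the defining relation of the principal branch $u=W_{0}(-\alpha w)$; inverting yields $z=-\tfrac{1}{\alpha}W_{0}(-\alpha w)$. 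Since $z=R_{\mathfrak{y}_{\alpha}}(w)$ by the functional equation, this is exactly $R_{\mathfrak{y}_{\alpha}}(w)=-\tfrac{1}{\alpha}W_{0}(-\alpha w)$, whence $\mathcal{R}_{\mathfrak{y}_{\alpha}}(z)=R_{\mathfrak{y}_{\alpha}}(z)/z=-W_{0}(-\alpha z)/(\alpha z)$. The branch with $W_{0}(0)=0$ is forced by $R(0)=0$.

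Part (3) then follows by inserting the classical series $W_{0}(x)=\sum_{n\ge1}\tfrac{(-n)^{n-1}}{n!}x^{n}$ into $R_{\mathfrak{y}_{\alpha}}$; after collecting signs the coefficient of $z^{n}$ becomes $\tfrac{(\alpha n)^{n-1}}{n!}$, which is precisely the free cumulant $\kappa_{n}$. For part (2), the $\boxtimes$--infinite divisibility is already built into the construction of $\mathfrak{y}_{\alpha}$ in Theorem~\ref{maintheorem}; for the $\boxplus$--infinite divisibility I would invoke the characterization of $I^{\boxplus}$, noting first that for $z\in\C^{-}$ the argument $-\alpha z$ lies in $\C^{+}$ and so avoids the branch cut $(-\infty,-1/e]$ of $W_{0}$, giving an analytic extension of $\mathcal{R}_{\mathfrak{y}_{\alpha}}$ to $\C^{-}$, and then deducing the value condition (values in $\C^{-}\cup\R$) from the positivity of the L\'evy measure exhibited in part (4).

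The technical heart is part (4), which I expect to be the main obstacle. The substitution $\zeta=1/z$ converts the L\'evy--Khintchine kernel into a Cauchy transform, $\mathcal{R}_{\mathfrak{y}_{\alpha}}(1/\zeta)-b=\int\frac{1}{\zeta-t}\nu(dt)=G_{\nu}(\zeta)$ with $b=\kappa_{1}=1$, so $\nu$ is recovered by Stieltjes inversion $\nu(ds)=-\tfrac{1}{\pi}\lim_{\varepsilon\downarrow0}\mathrm{Im}\,G_{\nu}(s+i\varepsilon)\,ds$. The crux is the boundary behaviour of $W_{0}$ on its cut: writing $W_{0}(-\alpha/s+i0^{+})=-u\cot u+iu$ for $u\in(0,\pi)$ and imposing that $We^{W}$ be real forces the relation $f(u)=\alpha/s$ with $f(u)=u\csc u\exp(-u\cot u)$, so the imaginary part of $W_{0}$ equals $u=f^{-1}(\alpha/s)$. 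Feeding this into the inversion formula produces $\nu_{\mathfrak{y}_{\alpha}}(ds)=\tfrac{1}{\alpha\pi}sf^{-1}(\alpha/s)\,ds$, while the branch point $-\alpha/s=-1/e$ pins the support to $[0,\alpha e]$. The delicate points are the boundary parametrization and the algebra that collapses $We^{W}$ into $-f(u)$.

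Finally, part (5) is a direct transform computation. Using $S_{\rho^{\boxplus t}}(z)=\tfrac{1}{t}S_{\rho}(z/t)$ from Proposition~\ref{basic}, the dilation rule $S_{D_{t}(\rho)}=\tfrac{1}{t}S_{\rho}$, and multiplicativity of $S$ under $\boxtimes$ (so $S_{\rho^{\boxtimes t}}=(S_{\rho})^{t}$), both sides of each identity reduce to the same exponential: for the first, $S_{\mathfrak{y}_{\alpha}^{\boxplus t}}(z)=\tfrac{1}{t}e^{-\alpha z/t}$ matches $S_{D_{t}(\mathfrak{y}_{\alpha}^{\boxtimes1/t})}(z)=\tfrac{1}{t}(e^{-\alpha z})^{1/t}$, and for the second $S_{\mathfrak{y}_{\alpha}^{\boxtimes t}}(z)=e^{-\alpha tz}$ matches $S_{D_{t}(\mathfrak{y}_{\alpha}^{\boxplus1/t})}(z)=e^{-\alpha tz}$, so the measures coincide by injectivity of the $S$--transform on $\mathcal{P}_{+}$.
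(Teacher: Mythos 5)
Your proposal is correct. On parts (1), (2), (3) and (5) it coincides with the paper's own proof: the paper likewise specializes Proposition \ref{Scum} to get $R_{\mathfrak{y}_{\alpha}}(z\mathrm{e}^{-\alpha z})=z$ and inverts with $W_0$, reads the cumulants off the Taylor expansion of $-W_0(-z)$ cited from the Lambert--$W$ literature, obtains $\boxplus$--infinite divisibility from the analytic extension of $\mathcal{R}_{\mathfrak{y}_{\alpha}}$ to $\C^-$, and treats (5) as a direct consequence of Proposition \ref{basic}. The genuine difference is in part (4). The paper performs no Stieltjes inversion: it takes the integral representation of $W_0(z)/z$ in Proposition \ref{int} as known, notes that its numerator $g(u)=(1-u\cot u)^2+u^2$ equals $uf'(u)/f(u)$, changes variables $s=\alpha/f(u)$, and splits $\tfrac{1}{1-sz}=1+\tfrac{sz}{1-sz}$ to read the L\'evy--Khintchine pair directly off the resulting integral over $[0,\alpha\mathrm{e}]$. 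Your route---turning the L\'evy--Khintchine kernel into a Cauchy transform via $\zeta=1/z$ and inverting along the branch cut of $W_0$ with the boundary values $W_0(-f(u)+i0)=-u\cot u+iu$---is a self-contained alternative that in effect re-proves Proposition \ref{int}; it costs you the boundary-value computation (which the paper borrows from Dykema--Haagerup only in its boolean section) but spares you from importing the integral representation, and it lands on the same density $\tfrac{1}{\alpha\pi}\,s f^{-1}(\alpha/s)$ on $[0,\alpha\mathrm{e}]$. One small structural remark: you make the value condition in (2) depend on the positivity established in (4), whereas the paper settles (2) directly from the mapping properties of $W_0$ on the half-plane; your ordering is legitimate but should be stated explicitly so that (2) and (4) do not appear circular.
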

The proof of this theorem is helped by 
the following property and the well-known integral representation of 
the Lambert's $W$--function (for instance, see \cite[Section 4]{Cetc96} and \cite[Theorem 3.1]{KJCB11}):
\begin{prop}\label{int}
\begin{enumerate}[{\rm (1)}]
\item
The principal branch of $W_{0}(z)$ has an analytic extension on $\mathbb{C} \backslash (-\infty, -1/e]$ 
and it takes $\C^- \cup \R$ on $\C^{-}$.  
\item
For any $z \in \C^{+}$, we have an integral representation:
\[
\frac{W_{0}(z)}{z}
=\dfrac{1}{\, \pi \, }
\int_{0}^{\pi}\frac{(1-u\cot u)^2+u^2}{z+u\csc u \exp (-u\cot u)}du.
\]
\end{enumerate}
\end{prop}
\begin{proof}[Proof of Theorem $11$]
\begin{enumerate}[{\rm (1)}]
\item
The $\boxtimes$--infinitely divisibility is trivial from the form of the $S$--transform and the facts in \cite{BeVo92}.
By Proposition \ref{Scum}, we have 
\[
R_{\mathfrak{y}_{\gamma}}(z\mathrm{e}^{-\gamma z})=z.
\]
Then the $R$--transform is given by using the Lambert's $W$--function as follows:
\begin{align*}
R_{\mathfrak{y}_{\gamma}}(z)=-\frac{1}{\gamma}W_{0}(-\gamma z), 
\end{align*}
and hence,
\begin{align*}
\mathcal{R}_{\mathfrak{y}_{\gamma}}(z)=\frac{W_{0}(-\gamma z)}{-\gamma z}. 
\end{align*}
\item
By Proposition \ref{id} and Proposition \ref{int} (1), 
$\mathcal{R}_{\mathfrak{y}_{\gamma}}$ has an analytic extension defined on $\C^{-}$ with value $\C^- \cup \R$,
which means that $\mathfrak{y}_{\gamma}$ is $\boxplus$--infinitely divisible.

\item
The Taylor type expansion of $-W_{0}(-z)$ at the origin 
is obtained from Equation (3.1) of \cite[pp. 339]{Cetc96}.

\item 
We put
$g(u)=(1-u\cot (u))^2+u^2$.
Noting that
\begin{align}\label{g}
g(u) = \frac{uf'(u)}{f(u)},
\end{align}
we obtain
\begin{align*}
\mathcal{R}_{\mathfrak{y}_{\gamma}}(z)
&= \frac{1}{\pi}\int_{0}^{\pi}\frac{g(u)}{-\gamma z+f(u)}{\mathrm d}u&\\
&= \frac{1}{\pi}\int_{0}^{\pi}\frac{g(u)/f(u)}{1-\gamma z/f(u)}{\mathrm d}u
= \frac{1}{\gamma \pi}\int_{0}^{\mathrm{\gamma e}}\frac{f^{-1}(\gamma/s)}{1-sz}
{\mathrm d}s,&
\end{align*}
where we have changed the variables as $s=\gamma/f(u)$.
\begin{align*}
\mathcal{R}_{\mathfrak{y}_{\gamma}}(z)
&= \frac{1}{\gamma \pi}\int_{0}^{\mathrm{\gamma e}}\frac{f^{-1}(\gamma/s)}{1-sz}{\mathrm d}s&\\
&= \frac{1}{\gamma \pi}
\int_{0}^{\mathrm{\gamma e}}
\left(
\frac{sz}{1-sz}+1
\right)
f^{-1}(\gamma/s){\mathrm d}s&\\
&= \frac{1}{\gamma}+ \frac{1}{\gamma \pi}
\int_{0}^{\mathrm{\gamma e}}
\frac{z}{1-sz}
sf^{-1}(\gamma/s){\mathrm d}s.&
\end{align*}
Therefore we obtain the L\'evy measure 
$\nu_{\mathfrak{y}_{\gamma}}({\mathrm d}s)=\dfrac{sf^{-1}(\gamma /s)}{\gamma\pi}{\mathrm d}s$ of $\mathfrak{y}_{\gamma}$.

\item It is direct consequence of Proposition \ref{basic}.
\end{enumerate}
\end{proof}

	\begin{rem}
	Here we consider the limit distribution with parameter $\gamma=1$.
	For example, if $\rho$ is the free Poisson distribution 
	with parameter $1$, this is the case.
	Simply we write $\mathfrak{y}$ 
	instead of $\mathfrak{y}_{1}$.
	There exists a probability measure $\rho$ such that 
	\begin{align}\label{yr}
	\mathcal{R}_{\rho}(z)
	=\dfrac{\mathcal{R}_{\mathfrak{y}}(z)-1}{z}.
	\end{align}

	Indeed,
	if we consider the shifted free cumulant sequence
	$\{k_{n}(\rho)\}_{n\in\N}=\left\{\frac{(n+1)^{n}}{(n+1)!}\right\}_{n\in\N}$,
	which is a sequence 
	of coefficients of Taylor expansion of $R_{\rho}$ at 0,  
	then it becomes a moment sequence of a probability measure.
	This means that the measure $\rho$ 
	is a free compound Poisson distribution
	with the compound measure $\sigma$, the moments of which are 
	$m_{n}(\sigma)=\frac{(n+1)^{n}}{(n+1)!}$. 
	From \eqref{yr}, we have
	\begin{align}\label{rf}
	z \mathcal{R}_{\mathfrak{y}}(zM_{\rho}(z))=zM_{\rho}(z).
	\end{align}
	By putting $P(z) = zM_{\rho}(z)$
	and using the Lagrange inversion formula, \eqref{rf} implies that
	\[
	n^{\text{th}}\text{ coefficient of } \{P(z)\}
	=
	\dfrac{1}{\,n\,}\times
	\left(
	(n-1)^{\text{st}}\text{ coefficient of } 
	\mathcal{R}_{\rho}(z)
	\right).
	\]
	Hence we obtain 
	the moments of $\rho$ 
	as 
	$$
	m_{n}(\rho)=\frac{(2n+1)^{n-1}}{n!}.
	$$
	\end{rem}


\subsection{On the limit distribution in boolean case}
Let $\mfs:=\mfs_{1}$ denote a probability measure with the moment sequence 
$\Big\{ \displaystyle{ \frac{n^n}{\, n! \, } } \Big\}_{n \ge 0}$,
the positivity of which is ensured by \cite{Ml10}.
Then its moment generating function $M_{\mfs}(z)$ can be given by 
$$
  M_{\mfs}(z) = \sum_{n = 0}^{\infty} \frac{n^n}{\, n! \, } \, z^n 
            = \frac{1}{1 - \eta (z)}
\eqno{(1)}
$$
where the function $\eta(z)$ is defined by 
$$
  \eta (z) = - W_0 (-z), \quad 
  z \in {\mathbb C} \backslash  \Big[ \, \frac{1}{\, e \,}, \infty \Big).
$$

\bigskip 

\bigskip 

\noindent
\begin{rem}
The following useful facts on the function $\eta$ can be found 
in \cite[Sect.2]{DyHa04}:
The map 
$$
 \theta \longmapsto \frac{\sin \theta}{\theta} 
                    \exp \big( \theta \cot \theta \big)
$$
is a bijection of $(0, \pi)$ onto $(0, e)$, and if we 
define the functions 
$\eta^{+}, \eta^{-}  \, : \, 
\big[ \, \mbox{\Large $\frac{1}{\, e \,}$}, \infty \big) 
\rightarrow {\mathbb C}$ by 
$$
  \eta^{\pm} \Big(  
                   \frac{\theta}{\sin \theta}
                   \exp \big( - \theta \cot \theta \big) \Big)
  = \theta \cot \theta \pm i \, \theta,
   \quad 0 \le \theta < \pi,
$$
then 
$$
    \eta^{\pm} (x) = \lim_{y \, \downarrow \, 0} \eta (x + i y),
    \quad x \in \big[ \, \mbox{\Large $\frac{1}{\, e \,}$}, \infty \big).
$$
\end{rem}

From (1), the Cauchy transform of the measure $\mfs$ is given by 
$$
   G_{\mfs} (\zeta) = 
    \frac{1}{\, \zeta \,} \, 
    \frac{1}{1 - \eta \Big( \mbox{\Large $\frac{1}{\, \zeta \,}$} \Big)},
\quad \mbox{ for } \zeta \in {\mathbb C} \setminus [0, e].
$$

Now we apply the Stieltjes inversion formula to obtain the 
density function $\varphi_{\mfs} (t)$ of the measure $\mfs$, that is, 
for $t \in [ 0, e ]$, 
$$
 \begin{aligned}
  \varphi_{\mfs} (t) 
     & = - \frac{1}{\, \pi \,} 
         \lim_{\varepsilon \, \downarrow \, 0} 
         \mathrm{Im} \Big( G_{\mfs} (t + i \varepsilon) \Big) \\
     & = - \frac{1}{\, \pi \,} 
         \lim_{\varepsilon \, \downarrow \, 0} 
         \mathrm{Im} \Bigg(
           \frac{1}{\, t + i \varepsilon \,} \, 
           \frac{1}{1 - \eta \Big( 
                     \mbox{\Large $\frac{1}{\, t + i \varepsilon \,}$} 
                    \Big)}\Bigg) \\
     & = - \frac{1}{\, \pi \,} 
         \lim_{\varepsilon \, \downarrow \, 0} 
         \mathrm{Im} \Bigg(
           \frac{\, t - i \varepsilon \,}{\, t^2 + \varepsilon^2 \,} \, 
           \frac{1}{1 - \eta \Big( 
                     \mbox{\Large $\frac{\, t - i \varepsilon \,}
                                        {\, t^2 + \varepsilon^2 \,}$} 
                    \Big)}\Bigg) \\
     & = - \frac{1}{\, \pi \,} 
         \mathrm{Im} \Bigg(
           \frac{1}{\, t \,} \, 
           \frac{1}{1 - \eta^{-} \Big( 
                     \mbox{\Large $\frac{1}{\, t\,}$} 
                     \Big)}\Bigg) ,
 \end{aligned}
$$
where the function $\eta^{-}$ is defined as in Remark above.
Here we change the variables 
$$
   \frac{1}{\, t\,} 
 = \frac{\theta}{\sin \theta} \exp \big( - \theta \cot \theta \big),
$$
then it follows that
$$
 \begin{aligned}
  \varphi_{\mfs} (t) 
     &= - \frac{1}{\, \pi \,} 
         \mathrm{Im} \Bigg(
           \frac{1}{\, t \,} \, 
           \frac{1}{1 - \big( \theta \cot \theta  -  i \theta \big)}\Bigg) \\
     &=   \frac{1}{\, \pi \,} 
           \frac{1}{\, t \,} \, 
           \frac{\theta}
           {\big(1 - \theta \cot \theta \big)^2  + \theta^2} \\
     &= \frac{1}{\, \pi \,} 
       \Big( 
       \frac{\theta}{\sin \theta} \exp \big( - \theta \cot \theta \big) 
       \Big) \, 
       \Bigg( \frac{ \theta}
              {\big(1 - \theta \cot \theta \big)^2  + \theta^2}\Bigg) \\
     &= \frac{1}{\, \pi \,} 
       \frac{\theta^2 \exp \big( - \theta \cot \theta \big) }
       {\sin \theta 
          \Big( \big(1 - \theta \cot \theta \big)^2  + \theta^2 \Big)} .
 \end{aligned}
$$
Thus we obtain the following proposition:
\begin{prop}
The probability density function $\varphi_{\mfs}$ of the measure $\mfs$ 
can be given by the implicit (parametric) form as
\[
   \varphi_\mfs \Big( \frac{\sin v}{v} \exp \big( v \cot v \big) \Big)
 = \frac{1}{\, \pi \,} 
       \frac{v^2 \exp \big( - v \cot v \big) }
       {\sin v 
          \Big( \big(1 - v \cot v \big)^2  + v^2 \Big)},
 \qquad 0 < v < \pi.
\]
\end{prop}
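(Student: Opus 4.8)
The plan is to read the density off the Cauchy transform by the Stieltjes inversion formula, feeding in the explicit nontangential boundary values of $\eta$ collected in the Remark. First I would convert the moment generating function $(1)$ into a Cauchy transform via the standard identity $G_{\mfs}(\zeta) = \tfrac{1}{\zeta}M_{\mfs}(1/\zeta)$, valid for $\zeta$ off the support, giving
\[
G_{\mfs}(\zeta) = \frac{1}{\zeta}\,\frac{1}{1-\eta(1/\zeta)}, \qquad \zeta \in \C\setminus[0,e].
\]
The support $[0,e]$ is forced by the bijection $\theta\mapsto \tfrac{\sin\theta}{\theta}\exp(\theta\cot\theta)$ of $(0,\pi)$ onto $(0,e)$ recorded in the Remark, together with the branch cut of $\eta(z)=-W_0(-z)$ along $[1/e,\infty)$.

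Next I would apply $\varphi_{\mfs}(t) = -\tfrac{1}{\pi}\lim_{\varepsilon\downarrow 0}\mathrm{Im}\,G_{\mfs}(t+i\varepsilon)$. The decisive observation is that $\tfrac{1}{t+i\varepsilon} = \tfrac{t-i\varepsilon}{t^2+\varepsilon^2}$ approaches $1/t$ from the \emph{lower} half-plane, so the relevant boundary value is $\eta^{-}$ rather than $\eta^{+}$. Invoking the Remark, I parametrize $\tfrac{1}{t} = \tfrac{\theta}{\sin\theta}\exp(-\theta\cot\theta)$ with $\theta\in(0,\pi)$ — precisely the inverse of the support map — which yields $\eta^{-}(1/t) = \theta\cot\theta - i\theta$.

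It then remains to extract the imaginary part, which is a routine rationalization: writing $1-\eta^{-}(1/t) = (1-\theta\cot\theta)+i\theta$ one finds
\[
\mathrm{Im}\!\left(\frac{1}{t}\,\frac{1}{1-\eta^{-}(1/t)}\right) = -\frac{1}{t}\,\frac{\theta}{(1-\theta\cot\theta)^2+\theta^2},
\]
so that $\varphi_{\mfs}(t) = \tfrac{1}{\pi t}\,\tfrac{\theta}{(1-\theta\cot\theta)^2+\theta^2}$. Substituting $\tfrac{1}{t} = \tfrac{\theta}{\sin\theta}\exp(-\theta\cot\theta)$ once more and relabelling $\theta$ as $v$ produces the stated parametric identity, with $t=\tfrac{\sin v}{v}\exp(v\cot v)$ sweeping out $(0,e)$ as $v$ runs over $(0,\pi)$.

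The main obstacle is analytic rather than computational: one must justify that $\eta$ extends continuously to the open boundary segment lying over $(0,e)$ with the claimed one-sided limits $\eta^{\pm}$, and that the limit in the Stieltjes inversion formula may be evaluated through this boundary extension. This is exactly the input imported from the Remark (after Dykema–Haagerup), so the real care goes into matching the approach direction of $t+i\varepsilon$ with the branch $\eta^{-}$ and checking that the resulting sign makes $\varphi_{\mfs}$ nonnegative on its support.
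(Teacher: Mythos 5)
Your proposal is correct and follows essentially the same route as the paper: pass from the moment generating function to the Cauchy transform, apply Stieltjes inversion, observe that $1/(t+i\varepsilon)$ approaches the cut from below so that the boundary value $\eta^{-}$ from the Dykema--Haagerup remark applies, and then parametrize $1/t=\tfrac{\theta}{\sin\theta}e^{-\theta\cot\theta}$ to extract the imaginary part. Your added attention to justifying the boundary limits and the sign/positivity check is a reasonable supplement, but the argument itself matches the paper's.
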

\begin{rem}
\begin{itemize}
\item[(1)]{The shape of the density function of $\varphi_{\mfs}$ is as the graph below, especially non-unimodal.}\\
\includegraphics{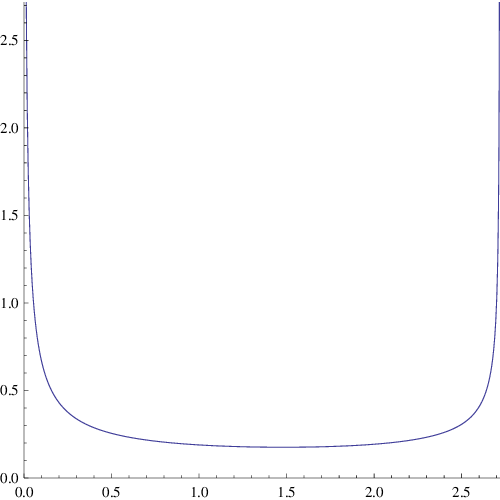}
\item[(2)]{The function $\big(1 - v \cot v \big)^2  + v^2$ also 
appears in the integral representation of $W_0(z)/z$ as we mentioned Proposition \ref{int}:
$$
 \frac{W_0(z)}{z} = \frac{1}{\, \pi \,} \! \int_{0}^{\pi} 
        \frac{\big(1 - v \cot v \big)^2  + v^2 }{z + 
            \mbox{\Large $\frac{v}{\sin v}$} \exp \big( - v \cot v \big)} 
      \mathrm{d} v.
$$
Thus
using $f(v)=v \csc v \exp ( - v \cot v )$ and \eqref{g} again,
the parametric form of the density function can be rewritten as 
$$
  \varphi_{\mfs} \Big( \frac{1}{\, f(v) \,} \Big) 
 = \frac{1}{\, \pi \,} \frac{\big( f(v) \big)^2}{ f'(v) }.
$$
}
\end{itemize}
\end{rem}
	\subsection{Concluding remark}
	We also know that there exists the similar moment sequence.
	In the paper by Dykema and Haagerup \cite{DyHa04}, 
	they find a limit distribution of DT-operator DT($1,\delta_{0}$).
	The moment of their one is $m_{n}=\frac{n^{n}}{(n+1)!}$.
	A natural question arises: how do we realize this limit theorem via random matrix model?


\def\polhk#1{\setbox0=\hbox{#1}{\ooalign{\hidewidth
  \lower1.5ex\hbox{`}\hidewidth\crcr\unhbox0}}}
\begin{thebibliography}{HD}






\normalsize
\baselineskip=17pt

\bibitem{Ak69}
N.~I. Akhiezer.
\newblock {\em The classical moment problem and some related questions in
  analysis}.
\newblock Translated by N. Kemmer. Hafner Publishing Co., New York, 1965.

\bibitem{BeNi09}
S.~T. Belinschi and A.~Nica.
\newblock On a remarkable semigroup of homomorphisms with respect to free
  multiplicative convolution.
\newblock {\em Indiana Univ. Math. J.}, 57(4):1679--1713, 2008.

\bibitem{BG06}
F.~Benaych-Georges.
\newblock Taylor expansions of {$R$}-transforms: application to supports and
  moments.
\newblock {\em Indiana Univ. Math. J.}, 55(2):465--481, 2006.

\bibitem{BePa99}
H.~Bercovici and V.~Pata.
\newblock Stable laws and domains of attraction in free probability theory.
\newblock {\em Ann. of Math. (2)}, 149(3):1023--1060, 1999.
\newblock With an appendix by Philippe Biane.

\bibitem{BeVo92}
H.~Bercovici and D.~V. Voiculescu.
\newblock L\'evy-{H}in\v cin type theorems for multiplicative and additive free
  convolution.
\newblock {\em Pacific J. Math.}, 153(2):217--248, 1992.

\bibitem{BeVo93}
H.~Bercovici and D.~V. Voiculescu.
\newblock Free convolution of measures with unbounded support.
\newblock {\em Indiana Univ. Math. J.}, 42(3):733--773, 1993.

\bibitem{BeWa08}
H.~Bercovici and J-C. Wang.
\newblock Limit theorems for free multiplicative convolutions.
\newblock {\em Trans. Amer. Math. Soc.}, 360(11):6089--6102, 2008.

\bibitem{Cetc96}
R.~Corless, G.~Gonnet, D.~Hare, D.~Jeffrey, and D.~Knuth.
\newblock On the lambert w function.
\newblock {\em Advances in Computational Mathematics}, 5:329--359, 1996.
\newblock 10.1007/BF02124750.

\bibitem{DyHa04}
K.~J. Dykema and U.~Haagerup.
\newblock Invariant subspaces of the quasinilpotent dt-operator.
\newblock {\em Journal of Functional Analysis}, 209(2):332 -- 366, 2004.

\bibitem{Gala04}
J.~Galambos and I.~Simonelli.
\newblock {\em Products of random variables: applications to problems of
  physics and to arithmetical functions}, volume 177.
\newblock CRC, 2004.

\bibitem{KoGn68}
B.~V. Gnedenko and A.~N. Kolmogorov.
\newblock {\em Limit distributions for sums of independent random variables}.
\newblock Translated from the Russian, annotated, and revised by K. L. Chung.
  With appendices by J. L. Doob and P. L. Hsu. Revised edition. Addison-Wesley
  Publishing Co., Reading, Mass.-London-Don Mills., Ont., 1968.

\bibitem{HiPe00}
F.~Hiai and D.~Petz.
\newblock {\em The semicircle law, free random variables and entropy},
  volume~77 of {\em Mathematical Surveys and Monographs}.
\newblock American Mathematical Society, Providence, RI, 2000.

\bibitem{KJCB11}
G.~A. Kalugin, D.~J. Jeffrey, R.~M. Corless, and P.~B. Borwein.
\newblock Stieltjes, poisson and other integral representations for functions
  of lambert $w$.
\newblock {\em Preprint, arXiv:1103.5640v1 [math.CV]}, 2011.

\bibitem{Ml10}
W.~M{\l}otkowski.
\newblock Fuss-catalan numbers in noncommutative probability.
\newblock {\em Documenta Mathematica}, 15:939--955, 2010.

\bibitem{NiSp06}
A.~Nica and R.~Speicher.
\newblock {\em Lectures on the combinatorics of free probability}, volume 335
  of {\em London Mathematical Society Lecture Note Series}.
\newblock Cambridge University Press, Cambridge, 2006.

\bibitem{PAS09}
V.~P{\'e}rez-Abreu and N.~Sakuma.
\newblock Free infinite divisibility of free multiplicative mixtures of the
  {W}igner distribution.
\newblock {\em J. Theoret. Probab.}, 25(1):100--121, 2012.

\bibitem{Pe95}
V.~V. Petrov.
\newblock {\em Limit theorems of probability theory}, volume~4 of {\em Oxford
  Studies in Probability}.
\newblock The Clarendon Press Oxford University Press, New York, 1995.
\newblock Sequences of independent random variables, Oxford Science
  Publications.

\bibitem{ReWe02}
G.~Rempa{\l}a and J.~Weso{\l}owski.
\newblock Asymptotics for products of sums and {$U$}-statistics.
\newblock {\em Electron. Comm. Probab.}, 7:47--54 (electronic), 2002.

\bibitem{SW97}
R.~Speicher and R.~Woroudi.
\newblock Boolean convolution.
\newblock In {\em Free probability theory ({W}aterloo, {ON}, 1995)}, volume~12
  of {\em Fields Inst. Commun.}, pages 267--279. Amer. Math. Soc., Providence,
  RI, 1997.

\bibitem{Th77b}
O.~Thorin.
\newblock On the infinite divisibility of the lognormal distribution.
\newblock {\em Scand. Actuar. J.}, 1977(3):121--148, 1977.

\bibitem{Tu10}
G.~H. Tucci.
\newblock Limits laws for geometric means of free random variables.
\newblock {\em Indiana Univ. Math. J.}, 59(1):1--14, 2010.

\bibitem{VoDyNi92}
D.~V. Voiculescu, K.~J. Dykema, and A.~Nica.
\newblock {\em Free random variables}, volume~1 of {\em CRM Monograph Series}.
\newblock American Mathematical Society, Providence, RI, 1992.
\newblock A noncommutative probability approach to free products with
  applications to random matrices, operator algebras and harmonic analysis on
  free groups.

\bibitem{Wa10}
J-C. Wang.
\newblock Local limit theorems in free probability theory.
\newblock {\em Ann. Probab.}, 38(4):1492--1506, 2010.

\end{thebibliography}
\end{document}